\newcommand{\longhookleftarrow}
{\ensuremath{\joinrel\relbar\joinrel\relbar\joinrel\leftarrow\rhook}}
\newtheorem{theorem}{Theorem}[section]
\newtheorem{lemma}[theorem]{Lemma}
\newtheorem{proposition}[theorem]{Proposition}
\newtheorem{corollary}[theorem]{Corollary}
\newtheorem{definition}[theorem]{Definition}
\theoremstyle{definition}
\newtheorem{remark}[theorem]{Remark}
\newtheorem{example}[theorem]{Example}
\title[New characterizations of reproducing kernel Hilbert spaces]{New characterizations of reproducing kernel Hilbert spaces  and applications to metric geometry}
\author[D. Alpay]{Daniel Alpay}
\address{(DA) Schmid College of Science and Technology \\
Chapman University\\
One University Drive
Orange, California 92866\\
USA}
\email{alpay@chapman.edu}
\author[P. Jorgensen]{Palle Jorgensen}
\address{(PJ)
Department of Mathematics, 14 MLH\\The University
of Iowa, Iowa City, Iowa 52242-1419\\ USA}
\email{jorgen@math.uiowa.edu}
\begin{document}
\maketitle
\begin{abstract}
  We give two new global and algorithmic constructions of the reproducing kernel Hilbert space associated to a positive definite kernel. We further present a
  general positive definite kernel setting using
  bilinear forms, and we provide new examples. Our results cover the case of measurable positive definite kernels, and we give applications to both stochastic analysis
  and metric geometry and provide a number of examples.
\end{abstract}

\noindent AMS Classification. 46E22, 43A35

\noindent Keywords: reproducing kernel, positive definite functions, approximation, algorithms, measures, stochastic processes.

\section{Introduction}
\setcounter{equation}{0}

Recently many researchers have made use of reproducing kernels in attacking diverse areas and problems from approximation and optimization; see 
\cite{MR3546121,MR3683690,MR1254059,MR1637511,MR4033493,MR3940262}. While there is a direct link from specific positive definite kernels, the link to the corresponding 
reproducing kernel Hilbert space (RKHS) is a rather abstract one, typically, one is faced only with an abstract completion, and the links to computation is often blurred. Our present aim is to offer two concrete approaches to RKHSs, explicit, and algorithmic. Our approaches are dictated directly by problems in constructive approximation and 
optimization.\smallskip

Let $S$ be some set and let $k(t,s)$ be positive definite on $S$. It is well known (see \cite{aron,meschkowski,saitoh}) that one can
associate to $k$ a uniquely defined Hilbert space (which we will denote by $\mathfrak H(k)$)
of functions defined on $S$ with reproducing kernel $k(t,s)$, meaning that for every $s\in S$ and $f\in\mathfrak H(k)$, the function $t\mapsto k(t,s)$ belong to $\mathfrak H(k)$ and
\begin{equation}
\langle f(\cdot),k(\cdot, s)\rangle_{\mathfrak H(k)}=f(s).
\label{repro-ker}
\end{equation}
The classical construction of the space $\mathfrak H(k)$ goes as follows. Consider the linear span $\mathfrak L(k)$
of the functions $k_s\,:\, u\mapsto k(u,s)$ endowed with the form
\begin{equation}
\label{number1.1}
\langle k_s,k_t\rangle_k=k(t,s),\quad t,s\in S.
\end{equation}
Then, one readily proves that the form \eqref{number1.1} is well defined, and has the reproducing kernel property. Thus $(\mathfrak L(k),\langle\cdot\, ,\,\cdot\rangle_k)$ is a pre-Hilbert space. Its metric space completion is unique up to a metric space isometry. One then shows that a uniquely defined completion exists,
which is made of functions, and which still has the reproducing kernel property.\smallskip

In general there is no direct characterization of the elements of $\mathfrak H(k)$ directly as functions on $S$ (notable exceptions include for instance the Fock space and the Bergman space).
In the present work we give two new global constructions of $\mathfrak H(k)$ and a global characterization of its elements. The first construction is in terms of
a projective limit, and the second one uses a space of measures associated to the kernel. In the latter, we associate an explicit norm, for every positive kernel, and valid for every element in the reproducing kernel Hilbert space; see formula
\eqref{norms}.\smallskip

Another space plays an important role in our constructions besides the space $\mathfrak L(k)$, namely the linear span $\mathfrak E(k)$
of the Dirac measures $\delta_s,s\in S$, endowed with the Hermitian form
defined first by
\begin{equation}
\label{deltatdeltas}  
\langle \delta_s,\delta_t\rangle=k(t,s),\quad t,s\in S.
\end{equation}
and extended by linearity to the finite linear combinations of Dirac measures.
In this case,  \eqref{deltatdeltas}  generates a positive, but possibly degenerate, form, and one needs to mod-out via the linear space of finite linear combinations of
Dirac measures which are self-orthogonal with respect to this Hermitian form.\\

We now describe the outline of the paper. In Section 2, starting from a fixed positive definite function (kernel) $k(t,s)$ defined on $S\times S$,
we derive an associated metric $d_k$ on $S$. Its properties are outlined; as well as its applications inside the paper. Section 3 deals with a duality lemma important in
the main constructions inside the paper: The starting point here is a general pre-Hilbert space, so a normed vector space where the norm is defined from an inner product.
Hence, we get a corresponding dual norm, i.e., a dual Banach space. With the use of a new transform, we show that this dual Banach space is a Hilbert space.
In Section 4,  starting from a fixed positive definite function (kernel) $k(t,s)$ defined on $S\times S$ , we present a constructive approach to the reproducing kernel
Hilbert space (RKHS) $\mathfrak H(k)$. This will make use of the following five steps: (i) the filter of all finite subsets of S, (ii) an explicit system of finite-rank
operators defined on functions on S, and (iii) an algorithm which related the operators defined from two finite sets, with one contained in the other. With this,
we establish (iv) a Kolmogorov consistency relation, and (v) we show that the constructive and algorithmic realization of the RKHS $\mathfrak H(k)$ follows directly
from this (Theorem \ref{4.2}). It is helpful to draw the following parallel between our particular choice of filter (i.e., the filter of all finite subsets) which deals
with the most general setting, and other hand, related filters for special cases, in the literature. In more detail, on the one hand, we have (i) for a given positive
definite kernel function $k(t,s)$ on a set $S$, our present filter consists of all finite subsets of $S$. By contrast, (ii) for special RKHS constructions, one has a
variety of alternative choices filters. They are used in special cases of RKHS constructions from harmonic analysis; for example, for Hardy spaces $\mathbf H_2(D)$, with
$D$ some complex domain,  it is popular to pick a filter consisting of closed curves inside $D$. There are interesting similarities and differences between
(ii) special cases, special filters, and (i) our case: the general case, and with our choice of filter.
Motivated by applications, in Section 5, we introduce a pre-Hilbert space of measures on $S$, with the measures defined relative to the
Borel sigma-algebra from the metric $d_k$. Our duality lemma from Section 3 will be used in a characterization of the RKHS norm in $\mathfrak H(k)$, Theorem \ref{thm1}.
The proof of this result is of independent interest, and is carried out in detail, in Section 6. Sections 8 and 9 in turn deal with a RKHS construction for a class of
measurable positive definite kernels. Finally, Section 10 contains applications to stochastic processes and Section 11 contains applications to geometric measure theory.

\section{The $d_k$-metric}
\setcounter{equation}{0}
\label{dist}

Our present starting point is a fixed set $S$ and a positive definite function (kernel) $k(t,s)$ defined on $S\times S$. From this we derive an associated metric $d_k$ on
$S$. We shall use the name ``metric'' even for the cases when $d_k$ might in fact not separate the points in $S$. It will be important for our subsequent construction to
have $d_k$ derived from first principles. For example, we shall specify our Borel sigma-algebra of subsets in $S$ with use of the metric. This will
be important in our constructive realization of the reproducing kernel Hilbert space (RKHS) $\mathfrak H(k)$ associated with the given kernel $k$. (Naturally, once
$\mathfrak H(k)$ is available, then the metric $d_k$  will be an easy byproduct. But our point in the present section is to derive the metric $d_k$ directly from an easy
matrix consideration.) For our present purpose, the metric $d_k$  will be available at the outset, and we show in Proposition \ref{procont} that the kernel function $k$ is
automatically jointly continuous relative to the product metric.\smallskip



The following is well known, and will hold even when $k(t,s)$ is only Hermitian. A proof is outlined for completeness.

\begin{lemma}
Let $S$ be a set and let $k(t,s)$ be positive definite on $S$. The formula \eqref{number1.1}
induces a well defined non-degenerate Hermitian form on the linear span $\mathfrak L(k)$ of the functions $k_s$, $s\in S$, and the space $\mathfrak L(k)$ endowed with this form is
a reproducing kernel pre-Hilbert space.
\end{lemma}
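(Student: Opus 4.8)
The plan is to follow the classical Moore--Aronszajn scheme, but organized so that well-definedness and the reproducing property drop out of a single computation. First I would fix a generic element of $\mathfrak L(k)$, written as a finite sum $f=\sum_i c_i k_{s_i}$, and observe that for any second element $g=\sum_j d_j k_{t_j}$ the sesquilinear extension of \eqref{number1.1} gives
\[
\langle f,g\rangle_k=\sum_{i,j}c_i\overline{d_j}\,k(t_j,s_i)=\sum_j\overline{d_j}\,f(t_j)=\sum_i c_i\,\overline{g(s_i)},
\]
where the middle equality uses $f(t_j)=\sum_i c_i k(t_j,s_i)$ and the last uses the Hermitian symmetry $k(t,s)=\overline{k(s,t)}$ together with $g(s_i)=\sum_j d_j k(s_i,t_j)$. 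The expression $\sum_j\overline{d_j} f(t_j)$ depends only on $f$ as a function on $S$, and $\sum_i c_i\overline{g(s_i)}$ depends only on $g$ as a function; hence $\langle f,g\rangle_k$ is well defined on $\mathfrak L(k)$ regarded as a space of functions, independently of the (highly non-unique) representations, and it is Hermitian by symmetry of the two formulas.

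Next I would read off the reproducing property by taking $g=k_s$ in the display: $\langle f,k_s\rangle_k=f(s)$ for every $f\in\mathfrak L(k)$ and $s\in S$, which is precisely \eqref{repro-ker} on $\mathfrak L(k)$, and $k_s\in\mathfrak L(k)$ trivially. Positivity is immediate from the hypothesis on $k$: $\langle f,f\rangle_k=\sum_{i,j}c_i\overline{c_j}\,k(s_j,s_i)\ge 0$. Thus $\langle\cdot,\cdot\rangle_k$ is at this stage a positive semidefinite Hermitian form, for which the Cauchy--Schwarz inequality $|\langle f,g\rangle_k|^2\le\langle f,f\rangle_k\,\langle g,g\rangle_k$ holds by the standard argument (expand $\langle f+\lambda g,f+\lambda g\rangle_k\ge 0$ and optimize in $\lambda$, no non-degeneracy needed).

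Finally, for non-degeneracy suppose $\langle f,f\rangle_k=0$. Cauchy--Schwarz forces $\langle f,g\rangle_k=0$ for all $g\in\mathfrak L(k)$, and in particular $f(s)=\langle f,k_s\rangle_k=0$ for every $s\in S$, so $f$ is the zero function. Hence $0$ is the only self-orthogonal vector, $\langle\cdot,\cdot\rangle_k$ is a genuine inner product, and $(\mathfrak L(k),\langle\cdot,\cdot\rangle_k)$ is a pre-Hilbert space with reproducing kernel $k$. The only step requiring any care is the first one: verifying that the value of the form is insensitive to the chosen finite representations of $f$ and $g$; once one reduces the form to an evaluation functional as above, this is automatic, and it simultaneously delivers the reproducing property. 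Everything else is routine, so I expect no real obstacle beyond bookkeeping with the two equivalent formulas for $\langle f,g\rangle_k$.
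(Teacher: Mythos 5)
Your proof is correct and follows essentially the same route as the paper's: the paper asserts well-definedness and the reproducing property directly from \eqref{number1.1} and then derives non-degeneracy by applying the reproducing property to an $f$ orthogonal to all kernel functions, exactly as you do (your only addition is the Cauchy--Schwarz step reducing $\langle f,f\rangle_k=0$ to orthogonality against everything, which the paper takes as the starting hypothesis). Your explicit computation $\langle f,g\rangle_k=\sum_j\overline{d_j}f(t_j)=\sum_i c_i\overline{g(s_i)}$ is a clean way of filling in the well-definedness detail the paper leaves to the reader.
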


\begin{proof}
The form \eqref{number1.1} extended to the linear span of the kernel functions, is well defined, and the reproducing kernel property stems directly from \eqref{number1.1}.
We now prove that the form is non-degenerate. Let $f\in\mathfrak L(k)$ be orthogonal to all the elements in the space. We have in particular
\[
\langle f,k_s\rangle_k=0,\quad \forall s\in S
\]
and so, using the reproducing kernel property, $f(s)=0$ for all $s\in S$.
\end{proof}

The norm associated to this form induces a distance via
\[
d(f,g)=\|f-g\|_k,\quad f,g\in\mathfrak L(k).
\]
In particular
\[
d(k_t,k_s)^2=k(t,t)-2{\rm Re}\, k(t,s)+k(s,s),\quad t,s\in S.
\]  
The right-hand side of the above equality is non negative since $k$ is positive definite. We set
\begin{equation}
d_k(t,s)=d(k_t,k_s),\quad t,s\in S.
\label{dkdistance}
\end{equation}

We note that $d_k$ is symmetric and satisfies the triangle inequality since, for $t,s,u\in S$ we have:
\[
\begin{split}  
d_k(t,s)&=\|k_t-k_s\|_k\\
&\le\|k_t-k_u\|_k+\|k_u-k_s\|_k\\
&=d_k(t,u)+d_k(u,s).
\end{split}
\]  

It need not define a distance $d_k(t,s)$ on $S$, since it may happen that $d_k(t_0,s_0)=0$ with $t_0\not= s_0$.
We then have:
\[
k(s,t_0)=k(s,s_0),\quad \forall s\in S.
\]

We will say that two points $t_0$ and $s_0$ are equivalent if $d(k_{t_0},k_{s_0})=0$. We have an equivalence relation $\sim$, and we consider the function $k(t,s)$ on the quotient
space $S/\sim$. We set
\begin{equation}
\widetilde{k}(\widetilde{t},\widetilde{s})=k(t,s),\quad t\in\widetilde{t},\,\,\, s\in\widetilde{s}
\end{equation}
and
\begin{equation}
\widetilde{d}_k(\widetilde{t},\widetilde{s})=d_k(t,s),\quad t\in\widetilde{t},\,\,\, s\in\widetilde{s}.
\end{equation}  

The proofs of the following two results are easy and omitted.
\begin{lemma}
$\widetilde{k}$ is well defined and positive definite on $S/\sim$.
\end{lemma}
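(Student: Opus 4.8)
The plan is to verify the two assertions — that $\widetilde k$ is well defined on the quotient $S/\!\sim$ and that it is positive definite there — essentially by unwinding the definition of the equivalence relation $\sim$. First I would address well-definedness. Suppose $t\sim t'$ and $s\sim s'$; I must show $k(t,s)=k(t',s')$. From $t\sim t'$ we have $d_k(t,t')=\|k_t-k_{t'}\|_k=0$, hence $k_t=k_{t'}$ as elements of $\mathfrak L(k)$ (the form on $\mathfrak L(k)$ is non-degenerate, but more simply the norm vanishes so the vector is zero). Evaluating this identity of functions at the point $s$ — or, cleaner, pairing against $k_s$ and using the reproducing property \eqref{number1.1} — gives $k(s,t)=\langle k_t,k_s\rangle_k=\langle k_{t'},k_s\rangle_k=k(s,t')$. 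By the same argument applied to $s\sim s'$ (now pairing $k_s-k_{s'}$ against $k_{t'}$), $k(t',s)=k(t',s')$. Combining and using the Hermitian symmetry of $k$, $k(t,s)=\overline{k(s,t)}=\overline{k(s,t')}=k(t',s)=k(t',s')$, so $\widetilde k(\widetilde t,\widetilde s):=k(t,s)$ does not depend on the chosen representatives. (The same computation shows $\widetilde d_k$ is well defined, though that is the content of the companion lemma already stated as omitted.)

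Second, positive definiteness on $S/\!\sim$ is immediate from positive definiteness on $S$: given finitely many classes $\widetilde t_1,\dots,\widetilde t_n\in S/\!\sim$ and scalars $c_1,\dots,c_n\in\CC$, pick representatives $t_j\in\widetilde t_j$ and observe
\[
\sum_{i,j=1}^n c_i\overline{c_j}\,\widetilde k(\widetilde t_i,\widetilde t_j)=\sum_{i,j=1}^n c_i\overline{c_j}\,k(t_i,t_j)\ge 0,
\]
the last inequality being exactly the hypothesis that $k$ is positive definite on $S$. The value of the left-hand side is independent of the choice of representatives by the well-definedness just established, so this is a legitimate statement about $\widetilde k$. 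Hermitian symmetry of $\widetilde k$ descends from that of $k$ in the same way.

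There is essentially no obstacle here; the only point requiring a moment's care is the well-definedness step, and specifically realizing that $d_k(t,t')=0$ forces the literal equality $k(s,t)=k(s,t')$ for every $s\in S$ — which the paper has in fact already recorded in the displayed identity $k(s,t_0)=k(s,s_0)$ for all $s$ immediately preceding the definition of $\sim$. So the cleanest write-up would simply cite that displayed consequence of $d_k(t_0,s_0)=0$ rather than re-derive it via the reproducing kernel property. Everything else is the routine transport of an algebraic inequality along a quotient map, which is why the authors declare the proof "easy and omitted."
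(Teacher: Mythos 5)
Your proof is correct, and since the paper explicitly omits the proof of this lemma as ``easy,'' there is nothing to contrast it with: your argument (using that $d_k(t,t')=0$ forces $k(\cdot,t)=k(\cdot,t')$ pointwise, then transporting the positivity of the Gram sums through any choice of representatives) is precisely the routine verification the authors intend. The one step worth making explicit, as you do, is the use of Hermitian symmetry to pass from equality in the second argument to equality in the first.
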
  


\begin{lemma} $\widetilde{d}_k$ is well defined and $(S/\sim,\widetilde{d}_k)$ is a metric space.
\end{lemma}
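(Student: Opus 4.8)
The plan is to verify the two clauses of the statement separately: first that $\widetilde{d}_k$ is well defined on the quotient $S/\!\sim$, and then that the resulting function is a genuine metric, i.e.\ that it is symmetric, satisfies the triangle inequality, and — the only point with any content — that it separates the points of $S/\!\sim$. Well-definedness is immediate: if $t'\sim t$ and $s'\sim s$, then by the triangle inequality already established for $d_k$ on $S$ we have $|d_k(t',s')-d_k(t,s)|\le d_k(t',t)+d_k(s',s)=0$, so $d_k(t',s')=d_k(t,s)$ and the value $\widetilde{d}_k(\widetilde{t},\widetilde{s})$ does not depend on the chosen representatives.

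Next I would transport the metric axioms from $d_k$ to $\widetilde{d}_k$. Symmetry and the triangle inequality for $\widetilde{d}_k$ follow verbatim from the corresponding properties of $d_k$ proved just above in the excerpt, simply by choosing representatives; similarly $\widetilde{d}_k(\widetilde t,\widetilde t)=d_k(t,t)=\|k_t-k_t\|_k=0$. Non-negativity is inherited as well, since $d_k(t,s)=d(k_t,k_s)\ge 0$.

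The only clause requiring a real argument is the separation property: if $\widetilde{d}_k(\widetilde{t},\widetilde{s})=0$ then $\widetilde{t}=\widetilde{s}$. But here the quotient was defined precisely so that this holds: $\widetilde{d}_k(\widetilde{t},\widetilde{s})=0$ means $d_k(t,s)=0$ for representatives $t,s$, which by the definition of the relation $\sim$ (namely $t\sim s \iff d(k_t,k_s)=0$) says exactly that $t\sim s$, i.e.\ $\widetilde{t}=\widetilde{s}$. So there is nothing to prove beyond unwinding definitions.

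Consequently there is no genuine obstacle here — which is presumably why the authors call the proof ``easy and omitted''. If I had to name the one place a reader might pause, it is the implicit claim that $\sim$ really is an equivalence relation (reflexive, symmetric, transitive) so that the quotient and the phrase ``$\widetilde t=\widetilde s$'' make sense; this is exactly the content of the symmetry and triangle inequality for $d_k$ together with $d_k(t,t)=0$, all of which were verified in the text preceding the statement. With that in hand, $(S/\!\sim,\widetilde{d}_k)$ is a metric space.
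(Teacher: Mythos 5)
Your proof is correct and is exactly the standard argument the authors have in mind when they declare the proof ``easy and omitted'': well-definedness via the two-sided triangle inequality $|d_k(t',s')-d_k(t,s)|\le d_k(t',t)+d_k(s',s)$, inheritance of the metric axioms by choice of representatives, and separation of points holding by the very definition of $\sim$. Your closing remark that the real (if trivial) content is checking $\sim$ is an equivalence relation is a fair observation and is indeed covered by the symmetry and triangle inequality for $d_k$ established just before the statement.
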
  



In the sequel we assume, without loss of generality, that $d_k$ is indeed a distance on $S$ (i.e. we consider $S/\sim$ rather than $S$).

\begin{proposition}
The function $k(t,s)$ on $S\times S$ is jointly continuous with respect to the product metric $D_k((t_1,s_1),(t_2,s_2))=d_k(t_1,t_2)+d_k(s_1,s_2)$ and
the diagonal function $t\mapsto k(t,t)$ is continuous with respect to $d_k$.
\label{procont}
\end{proposition}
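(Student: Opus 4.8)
The plan is to carry out the estimate inside the pre-Hilbert space $(\mathfrak L(k),\langle\cdot,\cdot\rangle_k)$, where by construction $k(t,s)=\langle k_s,k_t\rangle_k$ and $d_k(t,s)=\|k_t-k_s\|_k$. Fix an arbitrary point $(t_0,s_0)\in S\times S$. For $(t,s)\in S\times S$ I would write the difference as a telescoping sum
\[
k(t,s)-k(t_0,s_0)=\langle k_s,k_t\rangle_k-\langle k_{s_0},k_{t_0}\rangle_k
=\langle k_s-k_{s_0},k_t\rangle_k+\langle k_{s_0},k_t-k_{t_0}\rangle_k,
\]
and then apply the Cauchy--Schwarz inequality to each of the two terms, which gives
\[
|k(t,s)-k(t_0,s_0)|\le d_k(s,s_0)\,\|k_t\|_k+\sqrt{k(s_0,s_0)}\;d_k(t,t_0).
\]

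Next I would control the factor $\|k_t\|_k=\sqrt{k(t,t)}$, which is not globally bounded; this is precisely the point where one uses that only continuity (and not, say, a global Lipschitz bound) is being asserted. By the triangle inequality for $d_k$ established above, $\|k_t\|_k\le\|k_{t_0}\|_k+d_k(t,t_0)=\sqrt{k(t_0,t_0)}+d_k(t,t_0)$. Hence, if $D_k((t,s),(t_0,s_0))<\delta$, so that $d_k(t,t_0)<\delta$ and $d_k(s,s_0)<\delta$, the preceding inequality yields
\[
|k(t,s)-k(t_0,s_0)|\le \delta\bigl(\sqrt{k(t_0,t_0)}+\delta\bigr)+\sqrt{k(s_0,s_0)}\;\delta ,
\]
and the right-hand side tends to $0$ as $\delta\to 0$. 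Since $(t_0,s_0)$ was arbitrary, this establishes joint continuity of $k$ on $(S\times S,D_k)$.

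For the diagonal function $t\mapsto k(t,t)$ I would simply observe that $t\mapsto(t,t)$ is continuous from $(S,d_k)$ into $(S\times S,D_k)$, because $D_k((t,t),(t_0,t_0))=2\,d_k(t,t_0)$; thus $t\mapsto k(t,t)$ is the composition of this map with the jointly continuous $k$. Alternatively, and just as quickly, $|k(t,t)-k(t_0,t_0)|=\bigl|\,\|k_t\|_k^2-\|k_{t_0}\|_k^2\,\bigr|\le d_k(t,t_0)\bigl(\|k_t\|_k+\|k_{t_0}\|_k\bigr)$ by the reverse triangle inequality, and the right-hand side again goes to $0$ with $d_k(t,t_0)$. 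I do not expect any genuine obstacle in this argument; the only step requiring a little care is the local (rather than global) boundedness of $\|k_t\|_k$, which is handled by the triangle inequality for $d_k$.
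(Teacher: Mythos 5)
Your proof is correct and follows essentially the same route as the paper: the same telescoping decomposition $\langle k_s-k_{s_0},k_t\rangle_k+\langle k_{s_0},k_t-k_{t_0}\rangle_k$ with Cauchy--Schwarz for joint continuity, and the (reverse) triangle inequality in $\mathfrak L(k)$ for the diagonal. Your explicit treatment of the local boundedness of $\|k_t\|_k$ via $\|k_t\|_k\le\sqrt{k(t_0,t_0)}+d_k(t,t_0)$ is a small point the paper leaves implicit, but it is not a different argument.
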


\begin{proof} We first consider the second claim.
The triangle inequality in $\mathfrak L(k)$ gives
\[
\left|\|k_t\|_k-\|k_t\|_k\right|\le \|k_t-k_s\|_k,
\]
which can be rewritten as
\[
\left| \sqrt{k(t,t)}-\sqrt{k(s,s)}\right|\le d_k(t,s).
\]
Thus the map $t\mapsto \sqrt{k(t,t)}$ is continuous with respect to $d_k$ and so is its square.\\

To prove the joint continuity of $k(t,s)$ with respect to $D_k$ we write:
\[
\begin{split}
  \left|k(t_1,s_1)-k(t_2,s_2)\right|&\le \left|k(t_1,s_1)-k(t_1,s_2)\right|+ \left|k(t_1,s_2)-k(t_2,s_2)\right|\\
  &=|\langle k_{s_1}-k_{s_2}, k_{t_1}\rangle|+|\langle k_{s_2}, k_{t_1}-k_{t_2}\rangle|\\
&\le \sqrt{k(t_1,t_1)}d_k(s_1,s_2)+\sqrt{k(s_2,s_2)}d_k(t_1,t_2).
\end{split}    
\]  
  
\end{proof}  

\begin{example}
  For the covariance of the Brownian motion, $k(t,s)=t\wedge s$ with $t,s\ge 0$ we have
  \[
    d_k(t,s)=\sqrt{|t-s|}.
    \]
\end{example}  

We remark that:

\begin{proposition}
The elements of $\mathfrak L(k)$ are Lipschitz continuous with respect to $d_k$
\end{proposition}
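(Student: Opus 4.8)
The plan is to read off the Lipschitz estimate directly from the reproducing property together with the Cauchy--Schwarz inequality in the pre-Hilbert space $\mathfrak L(k)$. Fix $f\in\mathfrak L(k)$. Since, by the first lemma, $(\mathfrak L(k),\langle\cdot\,,\,\cdot\rangle_k)$ is a reproducing kernel pre-Hilbert space, we have $f(t)=\langle f,k_t\rangle_k$ for every $t\in S$. Hence, for $t,s\in S$,
\[
f(t)-f(s)=\langle f,k_t\rangle_k-\langle f,k_s\rangle_k=\langle f,k_t-k_s\rangle_k .
\]

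Next I would apply the Cauchy--Schwarz inequality for the (positive, non-degenerate) form $\langle\cdot\,,\,\cdot\rangle_k$ — which is valid on any pre-Hilbert space, no completeness being needed — to obtain
\[
\left|f(t)-f(s)\right|\le \|f\|_k\,\|k_t-k_s\|_k .
\]
By the definition \eqref{dkdistance} of the metric, $\|k_t-k_s\|_k=d_k(t,s)$, so
\[
\left|f(t)-f(s)\right|\le \|f\|_k\, d_k(t,s),\qquad t,s\in S,
\]
which is exactly the assertion that $f$ is Lipschitz with respect to $d_k$, with Lipschitz constant at most $\|f\|_k$.

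There is essentially no obstacle here: the only point to check is that the reproducing identity $f(t)=\langle f,k_t\rangle_k$ extends from the generators $k_s$ to all finite linear combinations (immediate by linearity), and that Cauchy--Schwarz applies before completion (it does, since the form is a genuine inner product on $\mathfrak L(k)$). I would also remark that this gives a uniform modulus of continuity on $\mathfrak L(k)$ only in the sense that the constant depends on $f$ through its norm; it does not yield equicontinuity of the whole span, but each fixed element is globally Lipschitz. This estimate is the natural companion of Proposition \ref{procont}, where the same Cauchy--Schwarz argument was used for the special elements $f=k_{t_1}$.
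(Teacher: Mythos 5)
Your argument is correct and is exactly the proof given in the paper: the reproducing identity $f(t)-f(s)=\langle f,k_t-k_s\rangle_k$ followed by Cauchy--Schwarz and the definition of $d_k$. Your added remarks (that Cauchy--Schwarz holds before completion, and that the Lipschitz constant $\|f\|_k$ depends on $f$) are accurate but not needed beyond what the paper states.
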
  

\begin{proof}
  The claim follows from the Cauchy-Schwarz inequality. Indeed, let $f\in\mathfrak L(k)$ and $t,s\in S$. Then
  \[
|f(t)-f(s)|=\langle f,k_t-k_s\rangle_k\le \|f\|_k\cdot\|k_t-k_s\|_k=\|f\|_k\cdot d_k(t,s).
  \]  
\end{proof}

The metric $d_k$ induces an associated topology and sigma-algebra, denoted by $\mathcal T_k$ and $\mathcal B_k$ respectively. We complete $\mathcal B_k$ with the sets of
outer measure $0$, and still denote the completion $\mathcal B_k$.

\section{A canonical isometry operator}
\setcounter{equation}{0}
\label{sec2}
Let $(\mathfrak H_0,\langle\cdot, \cdot\rangle_{\mathfrak H_0})$ denote a pre-Hilbert space. We take the inner product linear in the first variable and anti-linear in the second variable. Furthermore, we
denote by $(\mathfrak H,\langle\cdot,\cdot\rangle_{\mathfrak H})$ any of its completion. Recall that it is unique up to a Hilbert space isomorphism.
Let $\mathfrak H_0^*$ denote the Banach anti-dual of $\mathfrak H_0$, that is the space of anti-linear continuous maps from $\mathfrak H_0$ into $\mathbb C$.

\begin{lemma}
The dual norm defined by
\begin{equation}
  \label{poi}
\|\varphi\|_{\mathfrak H_0^*}  =\sup_{\substack{h\in\mathfrak H_0\\  \|h\|_{\mathfrak H_0}=1}}|\varphi(h)|,\quad\varphi\in \mathfrak H_0^*,
\end{equation}
is a Hilbert norm, and $\mathfrak H_0^*$ is complete with this norm.
\end{lemma}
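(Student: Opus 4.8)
The plan is to reduce the statement to the Riesz representation theorem applied to the \emph{completion} $\mathfrak H$: I will extend dual functionals from $\mathfrak H_0$ to $\mathfrak H$, represent them by vectors through an explicit ``transform'' $W\colon\mathfrak H^*\to\mathfrak H$, and then transport the Hilbert space structure of $\mathfrak H$ back to $\mathfrak H_0^*$.

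First I would pass to the completion. Since $\mathfrak H_0$ is dense in $\mathfrak H$ and each $\varphi\in\mathfrak H_0^*$ is Lipschitz on $\mathfrak H_0$ with constant $\|\varphi\|_{\mathfrak H_0^*}$, it extends uniquely to a continuous map $\widetilde\varphi$ on $\mathfrak H$; by density this extension is again anti-linear, and I expect the suprema in \eqref{poi} over the unit spheres of $\mathfrak H_0$ and of $\mathfrak H$ to coincide (the former sphere is dense in the latter and $\widetilde\varphi$ is continuous), so that $\|\widetilde\varphi\|_{\mathfrak H^*}=\|\varphi\|_{\mathfrak H_0^*}$. As restriction is the inverse operation, $\varphi\mapsto\widetilde\varphi$ will be an isometric isomorphism of $\mathfrak H_0^*$ onto the anti-dual $\mathfrak H^*$ of the complete space $\mathfrak H$, and it then suffices to treat $\mathfrak H^*$.

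Next I would introduce the transform. Applying the Riesz representation theorem to $\mathfrak H$ in its anti-linear form, to each $\psi\in\mathfrak H^*$ one associates the unique $W\psi\in\mathfrak H$ with
\[
\psi(g)=\langle W\psi,\,g\rangle_{\mathfrak H},\qquad g\in\mathfrak H,
\]
and $\|W\psi\|_{\mathfrak H}=\|\psi\|_{\mathfrak H^*}$. The point I want to record is that, because the inner product is linear in its first argument, uniqueness in the displayed identity forces $W(\psi_1+\psi_2)=W\psi_1+W\psi_2$ and $W(\lambda\psi)=\lambda W\psi$, i.e.\ $W$ is linear (not conjugate-linear), and it is a bijection of $\mathfrak H^*$ onto $\mathfrak H$ with inverse $h\mapsto\langle h,\,\cdot\,\rangle_{\mathfrak H}$. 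I would then set
\[
\langle\varphi,\psi\rangle_{\mathfrak H^*}:=\langle W\varphi,\,W\psi\rangle_{\mathfrak H},
\]
which is a sesquilinear form in the paper's orientation precisely because $W$ is linear, is positive definite because $W$ is injective, and satisfies $\langle\varphi,\varphi\rangle_{\mathfrak H^*}=\|W\varphi\|_{\mathfrak H}^2=\|\varphi\|_{\mathfrak H^*}^2$. Hence the dual norm \eqref{poi} is the norm of an inner product — equivalently, it obeys the parallelogram law — and pulling this back along the isometry of the previous step puts a Hilbert norm, equal to \eqref{poi}, on $\mathfrak H_0^*$. Completeness is then immediate, since $\mathfrak H$ is complete and $W$ is an isometric bijection, so $\mathfrak H^*$ and hence $\mathfrak H_0^*$ are complete (this is also the standard fact that the anti-dual of any normed space is a Banach space).

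I do not anticipate a genuine obstacle; the only thing needing real care is the conjugate-linear bookkeeping — verifying that the Riesz map $W$ for the anti-dual is honestly linear, which is what makes the transported form above sesquilinear in the orientation fixed at the start of Section~\ref{sec2} — together with checking that the extension-to-$\mathfrak H$ step preserves both anti-linearity and the exact value of the dual norm.
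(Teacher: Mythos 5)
Your argument is correct and is essentially the paper's own proof: extend each functional to the completion $\mathfrak H$, apply the Riesz representation theorem to obtain the transform ($W$ here, $T$ in the paper), and define the inner product on the dual by pulling back the inner product of $\mathfrak H$ along it. You simply spell out two points the paper leaves implicit — that the extension preserves the dual norm and that the Riesz map for anti-linear functionals is genuinely linear — both of which check out.
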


\begin{proof}
It follows from \eqref{poi} that $\varphi$ is also continuous from the chosen closure $\mathfrak H$ of $\mathfrak H_0$ into $\mathbb C$.
By Riesz theorem applied to $\varphi$ as a continuous map from $\mathfrak H$
into $\mathbb C$, there exists an element $T\varphi\in\mathfrak H$ such that
\begin{equation}
\label{denseT}
\varphi(h)=\langle T\varphi, h\rangle_{\mathfrak H}.
\end{equation}  
It follows that 
\[
\|\varphi\|_{\mathfrak H_0^*}=\|T\varphi\|_{\mathfrak H},
\]
and so the norm \eqref{poi} is defined by the inner product:
\begin{equation}
  \langle\varphi,\psi\rangle_{\mathfrak H_0^*}\stackrel{\rm def.}{=}\langle T\varphi,T\psi\rangle_{\mathfrak H}.
\label{33}
\end{equation}
\end{proof}

\begin{lemma}
In the notation of the previous lemma and of its proof, the operator $T$ is a Hilbert space isomorphism from
$\mathfrak H_0^*$ onto $\mathfrak H$.
\end{lemma}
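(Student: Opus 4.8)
The plan is to verify that the operator $T\colon \mathfrak H_0^*\to\mathfrak H$ defined implicitly by \eqref{denseT} is linear (anti-linearity of $\varphi$ makes $T$ genuinely linear), isometric, and surjective; together with completeness of $\mathfrak H_0^*$ from the previous lemma this gives the Hilbert space isomorphism. Injectivity and isometry are immediate from what is already proved: the relation $\|\varphi\|_{\mathfrak H_0^*}=\|T\varphi\|_{\mathfrak H}$ established in the proof of the previous lemma shows $T$ preserves norms, hence is injective, and the polarization identity (or the definition \eqref{33} directly) shows $\langle T\varphi,T\psi\rangle_{\mathfrak H}=\langle\varphi,\psi\rangle_{\mathfrak H_0^*}$, so $T$ is an isometry for the inner products. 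Linearity of $T$ follows by uniqueness in the Riesz representation: for scalars $a,b$ and $\varphi,\psi\in\mathfrak H_0^*$, the functional $a\varphi+b\psi$ is represented by $aT\varphi+bT\psi$ because $\langle aT\varphi+bT\psi,h\rangle_{\mathfrak H}=a\langle T\varphi,h\rangle_{\mathfrak H}+b\langle T\psi,h\rangle_{\mathfrak H}=a\varphi(h)+b\psi(h)$ for all $h\in\mathfrak H$; note that the anti-linearity of each $\varphi$ in $h$ is exactly what makes this combination produce the functional $a\varphi+b\psi$ rather than a conjugated version, so $T$ comes out linear.

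The substantive point is surjectivity: I must show every $g\in\mathfrak H$ equals $T\varphi$ for some $\varphi\in\mathfrak H_0^*$. Given $g\in\mathfrak H$, define $\varphi_g\colon \mathfrak H_0\to\mathbb C$ by $\varphi_g(h)=\langle g,h\rangle_{\mathfrak H}$ for $h\in\mathfrak H_0\subseteq\mathfrak H$. This is anti-linear in $h$ and, by Cauchy--Schwarz in $\mathfrak H$, satisfies $|\varphi_g(h)|\le\|g\|_{\mathfrak H}\|h\|_{\mathfrak H}=\|g\|_{\mathfrak H}\|h\|_{\mathfrak H_0}$, so $\varphi_g\in\mathfrak H_0^*$ with $\|\varphi_g\|_{\mathfrak H_0^*}\le\|g\|_{\mathfrak H}$. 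Now $T\varphi_g$ is the unique element of $\mathfrak H$ with $\langle T\varphi_g,h\rangle_{\mathfrak H}=\varphi_g(h)=\langle g,h\rangle_{\mathfrak H}$ for all $h\in\mathfrak H_0$; since $\mathfrak H_0$ is dense in $\mathfrak H$, this forces $T\varphi_g=g$. Hence $T$ is onto, and in fact this computation re-proves the isometry inequality in the remaining direction. Combining: $T$ is a linear, norm-preserving bijection from the complete inner product space $\mathfrak H_0^*$ onto $\mathfrak H$, i.e.\ a Hilbert space isomorphism.

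The only place where any care is needed — and the step I would flag as the main (minor) obstacle — is the interplay between $\mathfrak H_0$ and its completion $\mathfrak H$: one must observe that an element $\varphi\in\mathfrak H_0^*$ extends uniquely to a bounded anti-linear functional on $\mathfrak H$ with the same norm (this is already noted at the start of the previous lemma's proof and uses density of $\mathfrak H_0$ together with boundedness), so that the Riesz representation may legitimately be invoked on $\mathfrak H$, and conversely that density of $\mathfrak H_0$ in $\mathfrak H$ is what pins down $g$ from its inner products against $\mathfrak H_0$ alone. Everything else is a routine unwinding of definitions \eqref{denseT} and \eqref{33}.
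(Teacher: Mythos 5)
Your proof is correct and follows essentially the same route as the paper: the isometry is read off from \eqref{33}, and preimages are produced by restricting $h\mapsto\langle g,h\rangle_{\mathfrak H}$ to $\mathfrak H_0$ and invoking the Riesz identity \eqref{denseT}. The only (harmless) difference is that you establish surjectivity directly for every $g\in\mathfrak H$, whereas the paper notes that the range contains the dense set coming from \eqref{denseT} and concludes via the isometry together with completeness of $\mathfrak H_0^*$.
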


\begin{proof}
By \eqref{33}, $T$ is an isometry and hence is enough to show that $T$ has dense range. This follows from \eqref{denseT}.
\end{proof}

The function
\begin{equation}
  \label{rk12}
K(\varphi,\psi)=\langle T\varphi,T\psi\rangle_{\mathfrak H},\quad \varphi,\psi\in\mathfrak H_0^*,
\end{equation}
is positive definite, and independent of the chosen completion of $\mathfrak H_0$.
We associate to it the Hilbert space of functions of the form
\begin{equation}
F(\varphi)=\langle T\varphi, f\rangle_{\mathfrak H},\quad \|F\|=\|f\|_{\mathfrak H}
\end{equation}

In the case of the point evaluations we have
\[
  \begin{split}
    \overline{f(t)}&=\overline{\delta_t}(f)\\
    &=\langle k(\cdot, t),f\rangle_{\mathfrak H}
  \end{split}
\]
and so
\[
T(\delta_t)=k(\cdot, t)
\]
and the reproducing kernel \eqref{rk12} restricted to the point measures is (with $\mathfrak H_0$ the linear span of the $\delta_t$ with the form \eqref{number1.1})
\begin{equation}
K(\delta_t,\delta_s)=\langle k(\cdot, t),k(\cdot, s)\rangle_{\mathfrak H}=k(s,t).
  \end{equation}
Not always are point evaluations in $\mathfrak H_0^*$ (in particular $\mathfrak H_0$ need not be a space of functions) but the kernel \eqref{rk12} is associated in a natural way to
$\mathfrak H_0$.
\section{An injective limit construction}
\setcounter{equation}{0}
\label{secF}
The idea is to look at the filter $\mathcal F$ of finite subsets of $S$. For any such set $F$, one considers the finite dimensional reproducing kernel Hilbert space 
$\mathfrak L_F(k)$ with reproducing kernel $k(t,s)\big|_{F\times F}$. The argument below uses a limit over the filter of all finite subsets.
We denote by $K_F$ the corresponding Gram matrix. In order to get only invertible matrices we first extract from $S$ a possibly smaller set $S_0$ for which the functions
$k_t, t\in S_0$ are linearly independent and span the linear pan of the $k_t,t\in S$.
We then define the projection from the vector space of functions defined on $F=\left\{s_1,\ldots, s_m\right\}\subset S_0$
onto $\mathfrak L_F(k)$ by
\begin{equation}
  \label{interpo}
  (Q_Ff)(s)=\begin{pmatrix}k(s,s_1)&\cdots& k(s,s_m)\end{pmatrix}K_F^{-1}
  \underbrace{\begin{pmatrix}f(s_1)\\ \vdots\\ f(s_m)\end{pmatrix}}_{\stackrel{\rm def.}{=}f|_F},
\end{equation}
where the variable $s$ runs through $F$. To ease the notation, we write $S$ instead of $S_0$ in the sequel.

\begin{lemma}
  The map $Q_F$ has the following properties:\\
$(1)$ Interpolation:  
\begin{equation}
  \label{interpo2}
  (Q_Ff)(s_j)=f(s_j),\quad j=1,\ldots, m.
\end{equation}
$(2)$ Projection:
  \[
    Q_F^2=Q_F.\]
$(3)$ Norm:
\begin{equation}
  \label{norm2}
\|Q_Ff\|^2_{\mathfrak L_F(k)}=(f|_F)^*K_F^{-1}(f|_F).
\end{equation}
$(4)$ Order preserving: if $F_1\subset F_2$ are finite subsets of $S$,
\[
Q_{F_2}Q_{F_1}=  Q_{F_1}Q_{F_2}=Q_{F_1}.
\]
\end{lemma}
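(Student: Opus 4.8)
The plan is to derive all four properties from a single linear-algebra identity: for any $g=\sum_{i=1}^m c_i k_{s_i}\in\mathfrak{L}_F(k)$, with $F=\{s_1,\dots,s_m\}$ and coefficient vector $c=(c_1,\dots,c_m)^*$, one has $g|_F=K_Fc$. Since the functions $k_{s_i}$, $s_i\in F\subset S_0$, are linearly independent, $K_F$ is Hermitian and invertible, so $g$ is recovered from its restriction by $c=K_F^{-1}(g|_F)$. Reading \eqref{interpo} as $Q_Ff=\sum_i\bigl(K_F^{-1}(f|_F)\bigr)_i\,k_{s_i}$, this says that $Q_F$ sends an arbitrary function $f$ (defined at least on $F$) to the unique element of $\mathfrak{L}_F(k)$ whose restriction to $F$ equals $f|_F$; equivalently, $Q_F$ restricts to the identity on $\mathfrak{L}_F(k)$, and $Q_Fg$ depends on $g$ only through $g|_F$. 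I would record this observation first and then run through the four claims in the stated order.

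For (1), evaluating the row vector in \eqref{interpo} at $s=s_j$ gives precisely the $j$-th row of $K_F$, namely $\bigl(k(s_j,s_1),\dots,k(s_j,s_m)\bigr)$, so $(Q_Ff)(s_j)=e_j^*K_FK_F^{-1}(f|_F)=e_j^*(f|_F)=f(s_j)$. Property (2) is then immediate: by (1), $(Q_Ff)|_F=f|_F$, and since $Q_Fg$ depends only on $g|_F$ we get $Q_F(Q_Ff)=Q_Ff$. For (3), write $Q_Ff=\sum_i c_ik_{s_i}$ with $c=K_F^{-1}(f|_F)$; expanding the $\mathfrak{L}_F(k)$-norm and using $\langle k_{s_j},k_{s_i}\rangle_k=k(s_i,s_j)=(K_F)_{ij}$ from \eqref{number1.1}, the cross terms assemble to $\|Q_Ff\|^2_{\mathfrak{L}_F(k)}=c^*K_Fc$; substituting $c=K_F^{-1}(f|_F)$ and using $(K_F^{-1})^*=K_F^{-1}$ (as $K_F$ is Hermitian) gives $(f|_F)^*K_F^{-1}(f|_F)$.

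For (4), let $F_1\subset F_2$. Since $Q_{F_1}f\in\mathfrak{L}_{F_1}(k)\subseteq\mathfrak{L}_{F_2}(k)$ and $Q_{F_2}$ is the identity on $\mathfrak{L}_{F_2}(k)$, we get $Q_{F_2}Q_{F_1}f=Q_{F_1}f$. For the reverse composition, $Q_{F_1}g$ depends only on $g|_{F_1}$, while by (1) applied to $F_2$ we have $(Q_{F_2}f)|_{F_2}=f|_{F_2}$, hence $(Q_{F_2}f)|_{F_1}=f|_{F_1}$ since $F_1\subseteq F_2$; therefore $Q_{F_1}Q_{F_2}f=Q_{F_1}f$.

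There is no deep obstacle here; the work is bookkeeping. The two places that need care are the conjugate-linearity convention of \eqref{number1.1} in step (3) — one must track which argument of $k$ is anti-linear so the double sum collapses to $c^*K_Fc$ and not to $c^*K_F^*c$ — and keeping the domains of the various $Q_F$ compatible for the compositions in (4); the latter is exactly the point of isolating the statement that $Q_Ff$ depends on $f$ only through $f|_F$, after which (4) is purely formal.
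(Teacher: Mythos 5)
Your proposal is correct and follows essentially the same route as the paper: the row-of-$K_F$ computation for the interpolation property, idempotence as an immediate consequence, the Gram-matrix expansion collapsing to $(f|_F)^*K_F^{-1}(f|_F)$ for the norm, and for (4) the same two observations that $Q_{F_1}f$ depends only on $f|_{F_1}$ and that $\mathfrak L_{F_1}(k)\subseteq\mathfrak L_{F_2}(k)$. Your preliminary remark that $Q_F$ produces the unique element of $\mathfrak L_F(k)$ agreeing with $f$ on $F$ is a clean way of packaging what the paper does implicitly, but it is the same argument.
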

\begin{proof}
  Setting $s=s_i$ on the right-hand side of \eqref{interpo} we note that $\begin{pmatrix}k(s_i,s_1)&\cdots& k(s_i,s_m)\end{pmatrix}$ is the $i$-th row of $K_F$, and so
  $  \begin{pmatrix}k(s_i,s_1)&\cdots& k(s_i,s_m)\end{pmatrix}K_F^{-1}$ is the $1\times m$ row vector with all entries equal to $0$ besides the $i$-th entry equal to $1$. Hence \eqref{interpo2} holds.
  Applying the preceding argument to $Q_Ff$ we get $Q_F^2=Q_F$. \eqref{norm2} follows from
  \[
    \begin{split}
      \langle Q_Ff,Q_Ff\rangle_{\mathfrak L(k)}&=\langle \sum_{k,j=1}^m k(\cdot, s_j) (K_F^{-1})_{jk}\overline{f(s_k)},\sum_{i,\ell=1}^mk(\cdot, s_i) (K_F^{-1})_{i\ell}
      \overline{f(s_\ell)}(K_F^{-1})_{\ell i}(K_F)_{ij}
\rangle_{\mathfrak L(k)}\\
&=\sum_{j,k,i,\ell=1}^mf(s_\ell)(K_F^{-1})_{\ell i}(K_F)_{ij}(K_F^{-1})_{jk}\overline{f(s_k)}\\
&=\sum_{j,k,\ell=1}^mf(s_\ell)\left(\sum_{i=1}^m(K_F^{-1})_{\ell i}(K_F)_{ij}\right)(K_F^{-1})_{jk}\overline{f(s_k)}\\
&=\sum_{j,k,\ell=1}^mf(s_\ell)\delta_{\ell j}(K_F^{-1})_{jk}\overline{f(s_k)} \\
&=\sum_{j,k,\ell=1}^mf(s_\ell)\delta_{\ell j}(K_F^{-1})_{jk}\overline{f(s_k)}\\
  &=\sum_{j,k=1}^mf(s_j)(K_F^{-1})_{jk}\overline{f(s_k)},
\end{split}
\]  
which is \eqref{norm2}.\smallskip

Finally, let $F_1$ and $F_2$ be two finite subsets of $S$ such that $F_1\subset F_2$. Let $f$ be a function defined on $S$. The functions $f$ and $Q_{F_2}f$ coincide on $F_2$ and so on $F_1$, and so
\[
Q_{F_1}f=Q_{F_1}(Q_{F_2}f)
\]
and so $Q_{F_1}=Q_{F_1}Q_{F_2}$.
Next, $Q_{F_2}(Q_{F_1}f)=Q_{F_1}$ since the span of the kernels for $s\in F_1$ is isometrically included in the span of the kernels for $s\in F_2$.
\end{proof}

As a definition we introduce the  subspace of all the functions on $S$ for which the sup
\[
  \sup_{F\in\mathcal F}\|Q_Ff\|^2_{\mathfrak L_F(k)}
  \]
is finite. In the next theorem we show that this is the reproducing kernel Hilbert space associated to $k$.

\begin{theorem}
  Let $k(t,s)$ be positive definite on $S$.\\
  $(1)$ The following injective limit of Hilbert spaces
  \begin{equation}
    \label{space???}
\lim_{\substack{F\in\mathcal F\\ K_F>0}}\mathfrak L_F(k)
\end{equation}
as $F\rightarrow S$, with norm
\begin{equation}
  \label{norm?}
\|f\|^2_{\mathfrak H}=\sup_{F\in\mathcal F}\|Q_Ff\|^2_{\mathfrak L_F(k)}
\end{equation}
exists.\\
$(2)$ The supremum in \eqref{norm?} defines a Hilbert space norm, and the above injective limit coincides with the reproducing kernel Hilbert space
with reproducing kernel $k(t,s)$.
\label{4.2}
\end{theorem}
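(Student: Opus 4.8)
The plan is to identify the space defined by the finite supremum in \eqref{norm?} with the classical RKHS $\mathfrak H(k)$, by showing simultaneously that (a) the supremum is monotone in $F$ so the injective limit is literally an increasing union of finite-dimensional Hilbert spaces with compatible norms, (b) every element of $\mathfrak H(k)$ lies in the limit with the correct norm, and (c) conversely every function with finite supremum lies in $\mathfrak H(k)$. The monotonicity in (a) is the engine: if $F_1\subset F_2$, then from the projection/order-preserving properties of Lemma (items (2) and (4)), $Q_{F_1}=Q_{F_1}Q_{F_2}$ is an orthogonal projection when restricted to $\mathfrak L_{F_2}(k)$, whence $\|Q_{F_1}f\|_{\mathfrak L_{F_1}(k)}\le\|Q_{F_2}f\|_{\mathfrak L_{F_2}(k)}$. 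This is the Kolmogorov-type consistency promised in the introduction, and it shows the net $F\mapsto\|Q_Ff\|^2_{\mathfrak L_F(k)}$ is increasing, so the supremum is a genuine limit; it also shows the finite-dimensional spaces $\mathfrak L_F(k)$ embed isometrically into one another along the filter, so their injective limit (algebraic direct limit, then completion) exists as a Hilbert space, giving part (1).

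For part (2), first I would check that on $\mathfrak L(k)$ itself the new norm agrees with $\|\cdot\|_k$: for $f=\sum_{j}c_jk_{s_j}$ and $F\supset\{s_1,\dots\}$ one has $Q_Ff=f$ and, by \eqref{norm2}, $\|Q_Ff\|^2=(f|_F)^*K_F^{-1}(f|_F)=\sum c_i\overline{c_j}k(s_j,s_i)=\|f\|_k^2$; enlarging $F$ further does not change this, so the sup is exactly $\|f\|_k^2$. Thus $\mathfrak L(k)$ sits isometrically inside the new space, and the new space is complete (as an increasing union of finite-dimensional spaces with a monotone-limit norm, or directly: a Cauchy sequence is Cauchy in each $\mathfrak L_F(k)$, hence converges pointwise on $S$ via the reproducing property, and the limit function has finite sup-norm by Fatou-type monotonicity). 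So the new space is a complete space of functions on $S$ containing $\mathfrak L(k)$ isometrically and densely — the density coming because for any $f$ in the new space, $Q_Ff\to f$ pointwise and $\|Q_Ff\|\to\|f\|$, forcing $\|f-Q_Ff\|\to 0$. By uniqueness of the completion, this space is $\mathfrak H(k)$, and one verifies the reproducing kernel identity \eqref{repro-ker} survives the limit: $\langle f,k_s\rangle=\lim_F\langle Q_Ff,Q_F k_s\rangle=\lim_F (Q_Ff)(s)=f(s)$ once $F\ni s$.

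The main obstacle I anticipate is the density/convergence step — showing that finiteness of $\sup_F\|Q_Ff\|^2$ forces $\|f-Q_Ff\|_{\mathfrak H}\to 0$ rather than merely pointwise convergence. The clean way is to work inside one big completion: view all $Q_Ff$ as elements of $\mathfrak H(k)$ (legitimate since $Q_Ff\in\mathfrak L_F(k)\subset\mathfrak L(k)$), observe $\{Q_Ff\}_{F}$ is a net of projections $P_F f$ onto the increasing subspaces $\mathfrak L_F(k)$, so $\|Q_Ff\|_{\mathfrak H}$ is increasing and bounded, hence $P_F f$ converges in $\mathfrak H(k)$ to $P f$ where $P$ is the projection onto $\overline{\bigcup_F\mathfrak L_F(k)}=\mathfrak H(k)$, i.e. $P_Ff\to f$ for every $f\in\mathfrak H(k)$; and a function $f$ on $S$ with finite sup is recovered as the pointwise limit of this $\mathfrak H(k)$-convergent net, hence equals an element of $\mathfrak H(k)$ with matching norm. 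Care is needed with the subset $S_0$ reduction (so that all $K_F$ with $F\subset S_0$ are invertible and $Q_F$ is well-defined) and with the completed Borel structure, but these are bookkeeping; the conceptual content is the monotone-projection argument.
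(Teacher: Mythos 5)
Your proposal is correct and follows essentially the same route as the paper: both rest on the order-preserving identity $Q_{F_1}=Q_{F_1}Q_{F_2}$ making $F\mapsto\|Q_Ff\|^2_{\mathfrak L_F(k)}$ an increasing net over the filter of finite sets, on pointwise convergence extracted from singletons $F=\{s\}$, and on identifying the resulting complete Hilbert space of functions with $\mathfrak H(k)$ via the reproducing property. The one place you go beyond the paper is the explicit Pythagoras argument that, for a function $f$ with finite supremum, the net $Q_Ff$ is Cauchy in $\mathfrak H(k)$ because $\|Q_{F_2}f-Q_{F_1}f\|^2=\|Q_{F_2}f\|^2-\|Q_{F_1}f\|^2$ for $F_1\subset F_2$; the paper's Step 3 asserts the corresponding membership more tersely, so your version is a welcome sharpening of the same proof rather than a different one.
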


\begin{proof} We first note that when $F_1\subset F_2$, the space $\mathfrak L_{F_1}(k)$ is isometrically included in the space $\mathfrak L_{F_2}(k)$.
We denote by $J=J_{F_1F_2}$ the isometric inclusion from
$\mathfrak L_{F_1}(k)$ into $\mathfrak L_{F_2}(k)$.  The operator $JJ^*=Q_{F_1F_2}$ is the orthogonal projection from $\mathfrak L_{F_2}(k)$ onto
$\mathfrak L_{F_1}(k)$. By Kolmogorov's consistency theorem there is a uniquely defined Hilbert space $\mathfrak H$ such that

\[\renewcommand{\arraystretch}{1.5}
\begin{array}{ccccc}
\mathfrak H&&\stackrel{i}{\longhookleftarrow}
 & &\mathfrak L_{F_2}(k)\\
&\stackrel{i}{\nwarrow}& &\stackrel{J}{\nearrow}\\
&  &\mathfrak L_{F_1}(k)
\end{array}.
\]

To conclude the proof we now proceed in a number of steps.\\
  
  STEP 1: {\sl \eqref{norm?} defines a norm:} The arguments for a norm are easily checked, but it is important to note that the supremum is in fact an increasing limit since
  \[
F_1\subset F_2\,\,\Longrightarrow\,\,\|Q_{F_1}f\|^2_{\mathfrak L_{F_1}(k)}\,\le\, \|Q_{F_2}f\|^2_{\mathfrak L_{F_2}(k)}.
  \]  

  STEP 2: {\sl Any Cauchy sequence in the norm \eqref{norm?} converges pointwise:} To see this we take $F$ to be a singleton.\\
  
  STEP 3: {\sl Every Cauchy sequence converges to a function in $\mathfrak H$:} Indeed, from \eqref{norm?}, convergence in the norm implies convergence on finite sets, and the limiting function is
  obtained in the previous step. It remains to show that the function so obtained belongs to the space \eqref{space???}. This follows from the definition of \eqref{norm?} since the operators $Q_F$
  are defined on all functions, and we use the fact that a Cauchy sequence is uniformly bounded in the \eqref{norm?} norm and so gives a uniform bound on the $\|Q_Ff\|_{\mathfrak L_F(k)}$.\\
  
  STEP 4: {\sl The norm is defined by an inner product:} Since the norm is defined by an increasing limit, one can take the polarization formula for every $F$ and takes limit to obtain the
  corresponding inner product.\\

  STEP 5: {\sl The reproducing kernel property holds:} Let $s_0\in S$ and let $F=\left\{s_0,s_1,\ldots , s_n\right\}\subset S$, and let $f\in\mathfrak H$. We have
  \[
    \begin{split}
      \begin{pmatrix}k(s_0,s_1)  &k(s_0,s_1)&\cdots &k(s_0,s_n)\end{pmatrix}Q_F^{-1}\begin{pmatrix}f(s_0)\\ f(s_1)\\ \vdots \\ f(s_n)\end{pmatrix}
      &=\begin{pmatrix}1  &0&\cdots &0\end{pmatrix}\begin{pmatrix}f(s_0)\\ f(s_1)\\ \vdots \\ f(s_n)\end{pmatrix},\\
      &=f(s_0).
    \end{split}
  \]

To conclude one takes the limit for those $F\subset S$ which contain $s_0$.\\

\end{proof}

As a transition to the second construction, we note the following.
Denote by $\mathfrak M(k)$ the linear span of the delta measures $\delta_s$, and let $\mathfrak M_F(k)$ denote the linear span of the $\delta_s$ for $s\in F$.
The map $T_k$ defined by
\begin{equation}
\label{tkf}
T_k\left(\sum_{t\in F}z_t\delta_t\right)=\sum_{t\in F} z_t k_t
\end{equation}
is unitary from $\mathfrak M_F(k)$ onto $\mathfrak L_F(k)$, with matrix representation $k(t,s)\big|_{F\times F}$. \\

The map $T_k$ extends to a unitary map between the projective limits $\mathfrak D_1(k)=\lim_{\substack{F\rightarrow S \\ \mathcal F}}\mathfrak M_F(k)$ onto
$\mathfrak H(k)=\lim_{\substack{F\rightarrow S\\ \mathcal F}}\mathfrak L_F(k)$.

\begin{remark}
  {\rm Rather than restricting to a subset $S_0$ one can use the Moore-Penrose inverse $K_F^{[-1]}$ of the matrix $K_F$; see \cite{MR1987382} for the latter. Then, one needs
    to use Schur complement formulas. See  \cite{MR2061575,HornJohnson} for the latter.}
\end{remark}  
\section{Dual norms and Hilbert space construction}
\setcounter{equation}{0}
\label{dual123456789}
Recall that $\mathcal B_k$ is the Borel sigma-algebra generated by the metric $d_k$ defined in Section \ref{dist}.
Let $\xi$ be a signed measure on $\mathcal B_k$. It follows from the definition of the latter and the positivity of $k(t,s)$ that the integral
\begin{equation}
\label{xi-equiv}
\langle \xi\,,\,\xi\rangle_k=\iint_{S\times S}\overline{d\xi(t)}k(t,s)d\xi(s)
\end{equation}
is non-negative, but possibly infinite. The order of integration in \eqref{xi-equiv} is not important since $k(t,s)$ is continuous with respect to the underlying topology (see
Proposition \ref{procont}).
For two measures $\xi$ and $\eta$ for which \eqref{xi-equiv} is finite we set
\begin{equation}
\label{xi-equiv-123432`}
\langle \xi\,,\,\eta\rangle_k=\iint_{S\times S}\overline{d\eta(t)}k(t,s)d\xi(s).
\end{equation}
Since \eqref{xi-equiv} is a positive
(possibly degenerate) Hermitian form, the Cauchy-Schwarz inequality holds
\begin{equation}
\label{cs-098}  
|\langle \xi\,,\,\eta\rangle_k|^2\le \langle \xi\,,\,\xi\rangle_k \langle \eta\,,\,\eta\rangle_k
\end{equation}  
for any pair of signed matrices for which \eqref{xi-equiv} is finite.\smallskip

We will say that two signed measures for which \eqref{xi-equiv} is finite are equivalent if
\begin{equation}
\label{measures-equiv}  
\langle\xi_1-\xi_2\,,\,\xi_1-\xi_2\rangle_k=0.
\end{equation}
Thanks to \eqref{cs-098} we have an equivalence relation, which we denote by $\sim_k$. We use the same symbol for the equivalence class and for an element in the class, and
still denote by $\langle\cdot,\cdot\rangle_k$ the corresponding Hermitian form.

\begin{definition}
We denote by $\mathfrak M_1(k)$ the set of equivalence classes of signed measures on $\mathcal B_k$ for which \eqref{xi-equiv} is finite, and set
\begin{equation}
\label{equiv}  
\mathfrak B_1(k) =\left\{\xi\in \mathfrak M_1(k)\,:\,
\langle \xi,\xi\rangle_k\le 1\right\}.
\end{equation}
\label{m1}
\end{definition}

The space $\mathfrak M_1(k)$ endowed with the form \eqref{xi-equiv} is a pre-Hilbert space, and its dual (space of anti-linear continuous functionals)
$(\mathfrak M_1(k))^*$ is complete. We will consider the elements of the previously constructed space $\mathfrak H(k)$ as linear functionals on $\mathfrak M_1(k)$.
By the results of Section \ref{sec2} $(\mathfrak M_1(k))^*$ is a Hilbert space.

\begin{proposition}
The linear span of the delta measures $\delta_s$, $s\in S$ is dense in $\mathfrak M_1(k)$.
\end{proposition}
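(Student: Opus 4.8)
The plan is to show that every equivalence class $\xi \in \mathfrak{M}_1(k)$ can be approximated in the $\langle\cdot,\cdot\rangle_k$-norm by finite linear combinations of Dirac measures. The key technical tool is that the form $\langle\cdot,\cdot\rangle_k$ on $\mathfrak{M}_1(k)$ only ``sees'' a signed measure $\xi$ through the function $s \mapsto \int_S k(s,t)\,d\xi(t)$, which is Lipschitz continuous with respect to $d_k$ by the same Cauchy--Schwarz argument used for elements of $\mathfrak{L}(k)$; equivalently, through the pairing of $\xi$ against the Lipschitz functions $k_s$. So the first step is to reduce the density question to a statement about approximating integrals $\int_S k_s\, d\xi$ by Riemann-type sums $\sum_j \xi(A_j)\, k_{s_j}$.

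Concretely, I would argue as follows. Fix $\xi \in \mathfrak{M}_1(k)$ with $\langle \xi,\xi\rangle_k < \infty$, and fix $\varepsilon > 0$. Since $|\xi|(S)$ may be infinite a priori, first truncate: because $\iint k(t,s)\,\overline{d\xi(t)}\,d\xi(s)$ converges absolutely (the diagonal contributes $\int k(t,t)\,d|\xi|(t) < \infty$ is not automatic, so instead work directly with the quadratic form), one can find a set $E \in \mathcal{B}_k$ on which $\xi$ is ``concentrated up to $\varepsilon$'' in the sense that $\langle \xi - \xi|_E,\, \xi - \xi|_E\rangle_k < \varepsilon^2$, and then further reduce to a set of finite total variation. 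On such a set, use that $k$ is jointly $d_k$-continuous (Proposition \ref{procont}) to partition $E$ into finitely many Borel pieces $A_1,\dots,A_n$ of small $d_k$-diameter, pick $s_j \in A_j$, and set $\xi_\varepsilon = \sum_j \xi(A_j)\,\delta_{s_j}$. Then expand
\[
\langle \xi - \xi_\varepsilon,\, \xi - \xi_\varepsilon\rangle_k = \iint_{S\times S} \big(k(t,s) - k(t,s_{j(s)}) - k(t_{j(t)},s) + k(t_{j(t)},s_{j(s)})\big)\,\overline{d\xi(t)}\,d\xi(s),
\]
where $j(s)$ denotes the index of the block containing $s$; by joint continuity and the finite total variation on $E$, each factor is uniformly small on $A_i \times A_j$, so the whole double integral is $O(\varepsilon)$. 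Combining with the truncation error gives $\|\xi - \xi_\varepsilon\|_k \to 0$.

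The main obstacle I anticipate is handling the possibly infinite total variation of $\xi$: the natural Riemann-sum approximation wants a finite measure to chop up, but membership in $\mathfrak{M}_1(k)$ only controls the energy $\langle\xi,\xi\rangle_k$, not $|\xi|(S)$. The clean way around this is to observe that $\mathfrak{M}_1(k)$ with the seminorm $\|\cdot\|_k$ maps isometrically (after quotienting) into the pre-Hilbert space $\mathfrak{L}(k)^*$ or directly into $\mathfrak{H}(k)$ via $\xi \mapsto \big(k_s \mapsto \int k_s\,d\xi\big)$, and that the image of the span of Dirac measures is dense there by the very construction of $\mathfrak{H}(k)$ in Theorem \ref{4.2} (the $Q_F$-approximation). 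In other words, one can sidestep measure-theoretic truncation entirely: given $\xi$, its energy norm equals the $\mathfrak{H}(k)$-norm of the associated functional, and finite-subset truncation $Q_F$ applied on the dual side already produces Dirac combinations converging to it. I would present both routes briefly but lean on the second as the rigorous one, using the first only as the intuitive picture.
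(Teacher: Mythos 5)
Both of your routes differ from the paper's argument, and each has a genuine gap. The paper's proof is a one-line orthogonality computation, with no approximation scheme and no use of the continuity of $k$: if $\mu\in\mathfrak M_1(k)$ is orthogonal to every $\delta_t$, then $\langle\mu,\delta_t\rangle_k=\int_S k(t,s)\,d\mu(s)=0$ for all $t\in S$, and integrating this identity against $\overline{d\mu(t)}$ gives $\langle\mu,\mu\rangle_k=\iint_{S\times S}\overline{d\mu(t)}\,k(t,s)\,d\mu(s)=0$, so $\mu=0$ in $\mathfrak M_1(k)$; density is then read off from the triviality of the orthogonal complement of the Dirac span (carried out in the Hilbert space completion $\mathfrak D_1(k)$). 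You should notice that the quadratic form itself hands you the vanishing of $\|\mu\|_k$ once the ``moments'' $\int_S k(t,\cdot)\,d\mu$ vanish.

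As for your two routes: the Riemann-sum argument has two unrepaired holes, one of which you acknowledge. Finiteness of the energy $\langle\xi,\xi\rangle_k$ gives no control on the total variation $|\xi|(S)$, so the truncation to a set of finite mass with small energy error is not justified; and even for a finite measure, partitioning its support into finitely many pieces of small $d_k$-diameter requires tightness or total boundedness of $(S,d_k)$, which is not available for a general Borel measure on a general metric space. The route you call ``the rigorous one'' is circular. The isometry $\|T_k\xi\|_{\mathfrak H(k)}=\|\xi\|_k$ that you invoke is \emph{equivalent} to the density you are trying to prove: since the kernel functions span a dense subspace of $\mathfrak H(k)$, one has $\|T_k\xi\|_{\mathfrak H(k)}=\sup\left\{|\langle\xi,\eta\rangle_k|\,:\,\eta\in\mathrm{span}\{\delta_s\},\ \langle\eta,\eta\rangle_k\le 1\right\}$, and Cauchy--Schwarz gives only the inequality $\le\|\xi\|_k$; equality holds precisely when $\xi$ lies in the closure of the Dirac span. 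So deducing density from the isometry assumes the conclusion. (The paper establishes the isometric statement \emph{after}, and with the help of, the density proposition.)
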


\begin{proof}
  Let $\mu$ be orthogonal to the measures $\delta_t$, $t\in S$.  Then,
  \[
    \iint_{S\times S} \delta_{t_0}(t)k(t,s)d\mu(s)=0,\quad \forall t_0\in S,
  \]
  i.e.
  \[
\int_Sk(t,s)d\mu(s)=0,\quad \forall t\in S.
\]
Hence
\[
  \langle \mu,\mu\rangle_k=  \iint_{S\times S}\overline{d\mu(t)}k(t,s)d\mu(s)=0
\]
and so $\mu=0$.
\end{proof}

The operator $T_k$ defined in \eqref{tkf} extends to $\mathfrak M_1(k)$:
\begin{proposition}
The map $T_k$ defined by
\begin{equation}
(T_k\mu)(t)=\int_Sk(t,s)d\mu(s)
\end{equation} 
is an isometry from $\mathfrak M_1(k)$ into $(\mathfrak M_1(k))^*$, which extends to a unitary map between $\mathfrak D_1(k)$ (the Hilbert space closure $\mathfrak M_1(k)$) and
$(\mathfrak M_1(k))^*$.
\end{proposition}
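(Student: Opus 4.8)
The plan is to verify the three assertions in order: first that $T_k$ maps $\mathfrak M_1(k)$ into $(\mathfrak M_1(k))^*$, second that it is an isometry onto its image, and third that it extends to a unitary between the completion $\mathfrak D_1(k)$ and $(\mathfrak M_1(k))^*$. For the first point, fix $\mu\in\mathfrak M_1(k)$; the function $T_k\mu$ pairs with any $\eta\in\mathfrak M_1(k)$ via
\[
\overline{\eta}(T_k\mu)=\iint_{S\times S}\overline{d\eta(t)}\,k(t,s)\,d\mu(s)=\langle\mu,\eta\rangle_k,
\]
and by the Cauchy--Schwarz inequality \eqref{cs-098} this is bounded by $\|\mu\|_k\cdot\|\eta\|_k$, so $T_k\mu$ is a continuous anti-linear functional on $\mathfrak M_1(k)$ with $\|T_k\mu\|_{(\mathfrak M_1(k))^*}\le\|\mu\|_k$. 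This simultaneously shows $T_k$ is well defined on equivalence classes (if $\langle\mu,\mu\rangle_k=0$ then the functional vanishes).

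For the isometry statement I would show the reverse inequality. From the identification above, $\|T_k\mu\|_{(\mathfrak M_1(k))^*}=\sup\{|\langle\mu,\eta\rangle_k|:\|\eta\|_k\le1\}$, and taking the test measure $\eta=\mu/\|\mu\|_k$ (when $\|\mu\|_k\neq0$) gives $\|T_k\mu\|_{(\mathfrak M_1(k))^*}\ge\|\mu\|_k$; the degenerate case is trivial. Hence $\|T_k\mu\|_{(\mathfrak M_1(k))^*}=\|\mu\|_k$ for all $\mu\in\mathfrak M_1(k)$, so $T_k$ is isometric. By continuity it extends uniquely to an isometry, still denoted $T_k$, from the Hilbert space completion $\mathfrak D_1(k)$ into $(\mathfrak M_1(k))^*$.

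For surjectivity onto $(\mathfrak M_1(k))^*$, I would invoke the machinery of Section \ref{sec2}: with $\mathfrak H_0=\mathfrak M_1(k)$ and $\mathfrak H=\mathfrak D_1(k)$, Lemma in that section produces a Hilbert space isomorphism from $\mathfrak H_0^*=(\mathfrak M_1(k))^*$ onto $\mathfrak H=\mathfrak D_1(k)$, characterized by $\varphi(h)=\langle S\varphi,h\rangle_{\mathfrak D_1(k)}$ for the Riesz-type operator there. It suffices to check that on the dense set of point measures, the isometry $T_k$ agrees (up to the canonical identifications) with the inverse of that isomorphism: for $\mu=\delta_s$ one has $(T_k\delta_s)(t)=k(t,s)$, which as a functional on $\mathfrak M_1(k)$ sends $\eta\mapsto\overline{\eta}(k(\cdot,s))=\langle\delta_s,\eta\rangle_k$, matching the pairing induced by $\delta_s\in\mathfrak D_1(k)$ itself. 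Since the $\delta_s$ span a dense subspace of $\mathfrak M_1(k)$ (by the Proposition just proved) and hence of $\mathfrak D_1(k)$, and since $(\mathfrak M_1(k))^*$ is complete, the range of the extended $T_k$ is a closed subspace containing a total set, hence is all of $(\mathfrak M_1(k))^*$. Therefore $T_k:\mathfrak D_1(k)\to(\mathfrak M_1(k))^*$ is unitary.

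The main obstacle I anticipate is purely bookkeeping rather than conceptual: one must be careful that the "$\sup$ over $\|\eta\|_k\le1$" genuinely computes the dual norm on the \emph{pre-Hilbert} space $\mathfrak M_1(k)$ and that the density of point measures is used in the right completion, so that no element of $(\mathfrak M_1(k))^*$ is missed. Once the identification of the dual norm with $\langle\cdot,\cdot\rangle_k$ via $T_k$ is in place, surjectivity is immediate from Section \ref{sec2}, and the only subtlety is making the diagram of identifications commute on the dense subspace spanned by the $\delta_s$.
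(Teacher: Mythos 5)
Your proposal is correct and follows essentially the same route as the paper: the Cauchy--Schwarz bound \eqref{cs-098} for continuity, the normalized test measure $\eta=\mu/\|\mu\|_k$ for the reverse inequality (which the paper carries out in Step 2 of the proof of Theorem \ref{thm1}), and the Riesz-type isomorphism of Section \ref{sec2} for surjectivity onto $(\mathfrak M_1(k))^*$. The only difference is organizational: the paper's stated proof of this proposition records only the continuity estimate and defers the isometry identity and the unitarity to Sections \ref{sec2} and \ref{sec1}, whereas you assemble all three ingredients in one place.
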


\begin{proof}
Let $\mu\in\mathfrak M_1(k)$. It defines an element in $(\mathfrak M_1(k))^*$ via the formula
\begin{equation}
(\widetilde{T_k\mu})(\xi)=\int_{S}\overline{d\xi(t)}{T_k(\mu)(t)}=\langle \mu\,,\, \xi\rangle
\end{equation}
which is continuous in the $\mathfrak M_1(k)$ topology thanks to \eqref{cs-098}.
\end{proof}

\begin{theorem}
\label{thm1}
Let $k(t,s)$ be positive definite on the set $S$, with associated sigma-algebra of Borel
sets $\mathcal B_k$ associated to $d_k$. Let $\mathfrak M_1(k)$ be the space of signed measures on $\mathcal B_k$ such that
\[
\iint_{S\times S}\overline{d\xi(t)}k(t,s)d\xi(s)<\infty.
\]  
The space of functions of the form $\widetilde{f}(\xi)=\langle f,\xi\rangle$ with $f\in(\mathfrak M_1(k))^*$ endowed with the norm
\begin{equation}
\label{norms}  
\|\widetilde{f}\|=\|f\|_{\mathfrak M_1(k))^*}
\end{equation}
(where $\|f\|_{\mathfrak M_1(k))^*}$ denotes the dual norm)
is a Hilbert space. The restrictions of its elements to the jump measures $\delta_s$ is the reproducing kernel Hilbert space with reproducing kernel $k$.
\end{theorem}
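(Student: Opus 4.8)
The plan is to identify the Hilbert space $(\mathfrak M_1(k))^*$ produced by the duality machinery of Section~\ref{sec2} with the reproducing kernel Hilbert space $\mathfrak H(k)$, via the unitary $T_k$ and the injective-limit picture of Section~\ref{secF}. First I would recall from Section~\ref{sec2} (Lemmas on the canonical isometry $T$) that for any pre-Hilbert space $\mathfrak H_0$, the anti-dual $\mathfrak H_0^*$ with the dual norm is a Hilbert space, and the map $T:\mathfrak H_0^*\to\mathfrak H$ sending $\varphi$ to its Riesz representative in the completion $\mathfrak H$ is a unitary. Apply this with $\mathfrak H_0=\mathfrak M_1(k)$ equipped with the form \eqref{xi-equiv}; its completion is $\mathfrak D_1(k)$ by definition. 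So $(\mathfrak M_1(k))^*$ is a Hilbert space, and it is unitarily identified — through $T$, equivalently through the extended $T_k$ of the last Proposition — with $\mathfrak D_1(k)$. The space of functions $\widetilde f(\xi)=\langle f,\xi\rangle$ for $f\in(\mathfrak M_1(k))^*$ with the norm \eqref{norms} is then, tautologically, a reproducing-kernel-type Hilbert space with kernel $K(\xi,\eta)=\langle f_\xi, f_\eta\rangle$ where $f_\xi$ is the Riesz representative of $\xi$; this is the abstract kernel \eqref{rk12}. So the content of the theorem is the final sentence: restricting these functions to the Dirac measures $\{\delta_s\}$ recovers $\mathfrak H(k)$ with kernel $k$.

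For that, the key computation is the one already recorded after \eqref{rk12}: $T(\delta_t)=k(\cdot,t)$, hence the abstract kernel $K$ restricted to Dirac measures satisfies $K(\delta_t,\delta_s)=\langle k(\cdot,t),k(\cdot,s)\rangle_{\mathfrak H}=k(s,t)$. So evaluating $\widetilde f$ at $\delta_s$ gives $\widetilde f(\delta_s)=\langle f,\delta_s\rangle = \langle T_k^{-1}f, \text{(Riesz rep of } \delta_s)\rangle$, which after unwinding equals the value at $s$ of the function in $\mathfrak H(k)$ corresponding to $f$ under the unitary $T$; and the norm is preserved by \eqref{norms}. Concretely: the map $f\mapsto \big(s\mapsto \widetilde f(\delta_s)\big)$ is a Hilbert space isomorphism from $(\mathfrak M_1(k))^*$ onto a space of functions on $S$ whose reproducing kernel is $k$. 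Since $\delta_t\mapsto k_t$ under $T_k$ and the $k_t$ are total in $\mathfrak H(k)$, injectivity of this restriction map follows from density of $\operatorname{span}\{\delta_s\}$ in $\mathfrak M_1(k)$ (proved in the Proposition preceding Theorem~\ref{thm1}): if $\widetilde f(\delta_s)=0$ for all $s$ then $f$ annihilates a dense subspace, so $f=0$. Thus the restriction map is an isometry onto its range, and the range, containing all $k_s$ and complete, is exactly $\mathfrak H(k)$.

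Assembling the argument, I would proceed in four steps: (1) invoke Section~\ref{sec2} to get that $(\mathfrak M_1(k))^*$ is a Hilbert space and $T_k$ (extended as in the last Proposition) is a unitary onto it from $\mathfrak D_1(k)$; (2) observe the space of $\widetilde f$'s with norm \eqref{norms} is by construction isometric to $(\mathfrak M_1(k))^*$, hence a Hilbert space; (3) compute $T_k(\delta_t)=k_t$ and $K(\delta_t,\delta_s)=k(s,t)$, so the restriction-to-Diracs map sends $\widetilde f$ to a genuine function on $S$ and sends the total system $\{\delta_s\}$ to the total system $\{k_s\}$ of $\mathfrak H(k)$; (4) use density of $\operatorname{span}\{\delta_s\}$ in $\mathfrak M_1(k)$ to conclude the restriction map is injective, hence an isometry, with range a complete space of functions on $S$ containing every $k_s$ and having $k$ as reproducing kernel — that is, $\mathfrak H(k)$.

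The main obstacle is step~(4): making precise that ``restriction to the Dirac measures'' is injective and range-filling. Injectivity genuinely uses the density proposition, and one must be careful that the equivalence relation $\sim_k$ on measures (modding out self-orthogonal measures) does not collapse the Diracs inappropriately — but this is controlled because $\langle\delta_t,\delta_s\rangle_k=k(t,s)$ reproduces the original Gram data, and $d_k$ has already been arranged to separate points of $S$. One should also note the order-of-integration remark and Proposition~\ref{procont} to ensure all the integrals in \eqref{xi-equiv}–\eqref{xi-equiv-123432`} defining the pairing $\langle f,\xi\rangle$ are unambiguous. Everything else is formal bookkeeping with the Riesz isomorphism.
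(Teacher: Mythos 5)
Your proposal is correct and follows essentially the same route as the paper's proof in Section 6: invoke the Section 3 duality lemma (via the operator $U_k$, inverse to $T_k$) to make $(\mathfrak M_1(k))^*$ a Hilbert space, verify that the dual Banach norm agrees with this Hilbert norm, and then obtain the reproducing property from $T_k\delta_s=k_s$ together with density of the Dirac span. The only difference is that you spell out the injectivity of the restriction-to-Diracs map explicitly, a point the paper leaves implicit in its Step 3.
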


\section{Proof of Theorem \ref{thm1}}
\setcounter{equation}{0}
\label{sec1}

STEP 1: {\sl The dual $(\mathfrak M_1(k))^*$ is a Hilbert space}.\smallskip

This follows from Section \ref{sec2} but we give a specific argument here.
By a general result in functional analysis (see e.g. \cite{Brezis}), the dual is a Banach space. We show that its norm is defined by an inner product. To that purpose, let $f\in
(\mathfrak M_1(k))^*$. It extends to the completion $\mathfrak D_1(k)$ of $\mathfrak M_1(k)$. Since $\mathfrak M_1(k)$ is a pre-Hilbert space, then $\mathfrak D_1(k)$ is a
Hilbert space and so, by Riesz theorem applied to $\mathfrak D_1(k)$  there is an operator $U_k$ from $(\mathfrak M_1(k))^*$ into
$\mathfrak D_1(k)$ such that
\begin{equation}
\langle f , \xi\rangle=\langle U_kf\, ,\, \xi\rangle_{\mathfrak D_1(k)},
\end{equation}
where the brackets on the left denote the duality between $\mathfrak M_1(k)$ and its anti-dual. Setting $f=T_k(\mu)$ in the above equality we obtain
\[
\langle U_kT_k(\mu)\, ,\, \xi\rangle=\int_S\overline{(T_k\mu)(s)}d\xi(s)=\langle \mu, \xi\rangle_{\mathfrak D_1(k)}
\]
and so $U_kT_k=I$.\\

We define an inner product on $(\mathfrak M_1(k))^*$ as follows:
\[
\langle f, g\rangle=\langle U_kf\,,\, U_kg\rangle_{\mathfrak D_1(k)}.
\]  

For $f=T_k\xi$ and $g=T_k\eta$ two elements of $(\mathfrak M_1(k))^*$ we have
\[
\langle T_k\xi\,,\, T_k\eta\rangle_{(\mathfrak M_1(k))^*}=\langle U_kT_k\xi\,,\, U_kT_k\eta\rangle_{\mathfrak D_1(k)}=\langle\xi\, ,\, \eta\rangle_{\mathfrak D_1(k)}.
\]
For $\xi=\eta$ we have
\begin{equation}
  \langle  T_k\xi\, ,\, T_k\xi\rangle_{(\mathfrak M_1(k))^*}  =\|\xi\|_{\mathfrak D_1(k)}^2.
\end{equation}

STEP 2: {\sl The Banach norm in $(\mathfrak M_1(k))^*$ coincide with the Hilbert space norm.}\\

It follows from the fact that the Hilbert norm is the supremum of the inner products on the unit ball of $\mathfrak M_1(k)$.
But, by the definition of the norm as a supremum we have
\[
\begin{split}
\|T_k\xi\|^2_{(\mathfrak M_1(k))^*}&=\sup_{\eta\in\mathfrak B_1(k)}|(T_k\xi)(\eta)|^2\\
&=\sup_{\eta\in\mathfrak B_1(k)}|\langle\eta,\xi\rangle_k|^2\\
&\le \langle\eta,\eta\rangle_k\langle\xi,\xi\rangle_k\end{split}
\]
so that $\|T_k\xi\|^2\le \langle\xi,\xi\rangle_k$. Taking $\eta=\xi/\sqrt{\langle\xi,\xi\rangle_k}$ leads to equality.\\

STEP 3: {\sl $(\mathfrak M_1(k))^*$ is the reproducing kernel Hilbert space with reproducing kernel
\begin{equation}
\langle T_k\delta_t\, ,\, T_k\delta_s\rangle=k(t,s).
\end{equation}    
}

Let $f\in(\mathfrak M_1(k))^*$ and $s\in S$. We have:
\[
\langle f\,,\, T_k\delta_s\rangle=f(s)=\widetilde{f}(\delta_s).
\]

\section{An example}
\setcounter{equation}{0}
Every positive definite function is the covariance function of a centered Gaussian process; see e.g. \cite[pp. 38-39]{Neveu68}.
Applying this result to the inner product of a real Hilbert space $\mathfrak H$ with inner product $\langle\cdot,\cdot\rangle$ and associated norm
$\|\cdot\|$, we obtain a process called {\sl the associated Gaussian space}.
The elements of this space are the centered Gaussian variables $W_h$, $h\in\mathfrak H$, with law $N(0,\|h\|^2)$ and such that
\[
\mathbb E(W_{h_1}W_{h_2})=\langle h_1,h_2\rangle, \quad h_1,h_2\in\mathfrak H.
\]
Let
\begin{equation}
\label{k2}  
k(t,s)=\sum_{n=0}^\infty\frac{t^ns^n}{(n!)^2},
\end{equation}
This gives an isometry sending a measure into the corresponding sequence of moments in the weighted $\ell_2$ space.
Then
\[
d\mu\in\mathfrak M_1(k)\quad\iff\quad\sum_{n=0}^\infty\frac{\big|\int_{\mathbb R}  t^nd\mu(t)\big|^2}{(n!)^2}<\infty
\]
(condition on the moments of the measure).
The Gaussian measures
\[
d\mu_h(t)=\frac{1}{\sqrt{2\pi}\|h\|}e^{-\frac{t^2}{2\|h\|^2}}dt
\]
belong to $\mathfrak M_1(k)$. There are explicit formulas for the moments.
We recall in particular that
\begin{equation}
\mathbb E(W_{h}^{2n})=\frac{(2n)!}{2^nn!}\|h\|^{2n},\quad n=0,1,2,\ldots
\label{moment567}
\end{equation}
\begin{proposition}
In the above notation,
\begin{equation}
\langle \mu_{h_1},\mu_{h_2}\rangle_k=k\left(\frac{\|h_1\|_2}{2},\frac{\|h_2\|_2}{2}\right),\quad h_1,h_2\in\mathfrak H.
\end{equation}  
\end{proposition}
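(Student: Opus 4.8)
The plan is to evaluate $\langle\mu_{h_1},\mu_{h_2}\rangle_k$ by expanding the kernel \eqref{k2} into its power series and integrating term by term. First I would note that $\mu_{h_1}$ and $\mu_{h_2}$ are positive, hence real, measures, so that
\[
\langle\mu_{h_1},\mu_{h_2}\rangle_k=\iint_{\mathbb R\times\mathbb R}k(t,s)\,d\mu_{h_1}(s)\,d\mu_{h_2}(t)
\]
with no conjugates to keep track of. Substituting \eqref{k2} and interchanging the summation with the double integral gives
\[
\langle\mu_{h_1},\mu_{h_2}\rangle_k=\sum_{n=0}^\infty\frac{1}{(n!)^2}\Big(\int_{\mathbb R}s^n\,d\mu_{h_1}(s)\Big)\Big(\int_{\mathbb R}t^n\,d\mu_{h_2}(t)\Big),
\]
which is just the inner product read off through the isometry of $\mathfrak M_1(k)$ into the weighted $\ell_2$ space of moment sequences already recorded after \eqref{k2}.

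Next I would substitute the Gaussian moments. Each $\mu_h$ is centered, so all its odd moments vanish and only the indices $n=2m$ contribute; and by \eqref{moment567}, $\int_{\mathbb R}t^{2m}\,d\mu_h(t)=\mathbb E(W_h^{2m})=\frac{(2m)!}{2^m m!}\,\|h\|^{2m}$. Plugging this in and using the cancellation $\frac{1}{((2m)!)^2}\big(\frac{(2m)!}{2^m m!}\big)^2=\frac{1}{4^m(m!)^2}$, one obtains
\[
\langle\mu_{h_1},\mu_{h_2}\rangle_k=\sum_{m=0}^\infty\frac{1}{(m!)^2}\Big(\frac{\|h_1\|^2}{2}\Big)^m\Big(\frac{\|h_2\|^2}{2}\Big)^m,
\]
and comparing with \eqref{k2} identifies the right-hand side with $k\big(\tfrac{\|h_1\|^2}{2},\tfrac{\|h_2\|^2}{2}\big)$, which is the asserted identity.

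The only step genuinely needing justification is the term-by-term integration in the first displayed identity. I would verify it by Fubini--Tonelli: since $\int_{\mathbb R}|t|^{n}\,d\mu_h(t)=\|h\|^{n}\,\mathbb E|Z|^{n}$ with $Z$ standard normal, a ratio test shows that $\sum_n\frac{1}{(n!)^2}\big(\int|t|^{n}d\mu_{h_2}\big)\big(\int|s|^{n}d\mu_{h_1}\big)<\infty$ for every $h_1,h_2$ (the consecutive ratio behaves like $\|h_1\|\,\|h_2\|/n\to0$); this yields absolute convergence of the double series, hence both the legitimacy of the interchange and, in passing, a re-proof of the text's assertion that $\mu_h\in\mathfrak M_1(k)$. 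Once this is in place the rest is a short bookkeeping computation with Gaussian moments, so I do not anticipate a real obstacle; the only mild subtlety is keeping the combinatorial factors straight.
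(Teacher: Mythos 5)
Your proposal is correct and follows essentially the same route as the paper: expand the kernel \eqref{k2} as a power series, observe that only even moments of the centered Gaussians survive, substitute the moment formula \eqref{moment567}, and carry out the combinatorial cancellation to recover $k\bigl(\|h_1\|^2/2,\|h_2\|^2/2\bigr)$. The paper's proof starts directly from the moment-series expression and omits the Fubini--Tonelli justification of the term-by-term integration, so your added convergence check (which also re-verifies $\mu_h\in\mathfrak M_1(k)$) is a welcome but inessential supplement rather than a different method.
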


\begin{proof}
Using \eqref{moment567} we can write for $h_1,h_2\in\mathfrak H$:
\[
\begin{split}
\langle \mu_{h_1},\mu_{h_2}\rangle_k&=\sum_{n=0}^\infty\frac{\mathbb E(W_{h_1}^{2n})\mathbb E(W_{h_2}^{2n})}{((2n)!)^2}\\
&=\sum_{n=0}^\infty\frac{1}{(n!)^2}\left(\frac{\|h_1\|^2}{2}\right)^n\left(\frac{\|h_2\|^2}{2}\right)^n\\
&=k\left(\frac{\|h_1\|_2}{2},\frac{\|h_2\|_2}{2}\right).
\end{split}
\]
\end{proof}
We note that this last kernel was studied in \cite{daf1,MR3816055}.

\section{Measurable kernels: Dual norm construction}
\setcounter{equation}{0}
In the previous we considered a positive definite function $k(t,s)$ defined for $(t,s)\in S\times S$ and associated to it in a natural way a metric and a corresponding sigma-algebra.
The arguments still work when $S$ is endowed ahead of time with a sigma-algebra, say $\mathcal B$, and $k(t,s)$ is assumed only jointly measurable in $(t,s)$ with
respect to $\mathcal B$, and has the property that the double integral \eqref{xi-equiv}
\[
\langle \xi\,,\,\xi\rangle_k=\iint_{S\times S} \overline{d\xi(t)}k(t,s)d\xi(s)
\]
is positive or possibly infinite for every signed measure on $\mathcal B$. As in Section \ref{dual123456789} we define two signed measures $\xi_1$ and $\xi_2$ for which the above integral is finite, to be equivalent if
\eqref{measures-equiv} holds, and the space ${\mathfrak M_1}(k)$ is defined as in Definition \ref{m1}.

\begin{definition}
With the above notation, the function $k(t,s)$ is called positive definite.
\end{definition}  

For $\xi\in\mathfrak M_1(k)$ the operator $T_k$ defined by
\[
(T_k\xi)(t)=\int_Sk(t,s)d\xi(s)
\]  
defines an element of $(\mathfrak M_1(k))^*$  via
\[
(\widetilde{T_k\xi})(\mu)=\langle \xi,\mu\rangle_k
\]  
Theorem \ref{thm1} takes now the following form:

\begin{theorem}
\label{thm2}
Let $(S,\mathcal B)$ be a measurable space, and let $k(t,s)$ jointly measurable for $t,s\in S$ and positive definite on $S$.  
The space of functions of the form $\widetilde{f}(\xi)=\langle f,\xi\rangle$ with $f\in(\mathfrak M_1(k))^*$ endowed with the norm
\[
\|\widetilde{f}\|=\|f\|_{(\mathfrak M_1(k))^*}.
\]
(where $\|f\|_{(\mathfrak M_1(k))^*}.$ is the dual norm)
is a Hilbert space of functions defined on $\mathfrak M_1(k)$, with the reproducing kernel $\langle T_k\mu,T_k\nu\rangle$ and reproducing kernel property
\begin{equation}
\widetilde{f}(\xi)=\langle f, T_k\xi\rangle_{(\mathfrak M_1(k))^*}.
\end{equation}  
\end{theorem}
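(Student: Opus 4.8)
The plan is to observe that Theorem~\ref{thm2} is Theorem~\ref{thm1} with the metric sigma-algebra $\mathcal B_k$ replaced by an abstract measurable structure $\mathcal B$, and the joint continuity of $k$ replaced by joint $\mathcal B\otimes\mathcal B$-measurability together with the standing assumption that \eqref{xi-equiv} is non-negative (possibly $+\infty$) for every signed measure. I would therefore run the three-step argument of Section~\ref{sec1} almost verbatim, checking at each step that it used no property of $k$ beyond what is now assumed. The preliminary point to settle is that $\mathfrak M_1(k)$ is again a pre-Hilbert space: by hypothesis \eqref{xi-equiv} is a positive, possibly degenerate, Hermitian form on the signed measures for which it is finite, so the Cauchy--Schwarz inequality \eqref{cs-098} holds and one mods out the self-orthogonal measures exactly as in Definition~\ref{m1}. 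The one place where joint measurability must do the work that joint continuity did in Section~\ref{dual123456789} is in seeing that the iterated integral $\langle\xi,\eta\rangle_k$ is independent of the order of integration and is an honest sesquilinear form: I would either appeal to Tonelli applied to the non-negative iterated integrals coming from the Jordan decompositions of $\xi$ and $\eta$ (legitimate because $k$ is jointly measurable), or, more safely, define $\langle\xi,\eta\rangle_k$ on $\mathfrak M_1(k)$ by polarization from the finite quantities \eqref{xi-equiv}. Finally one notes $\delta_t\in\mathfrak M_1(k)$ because $\langle\delta_t,\delta_t\rangle_k=k(t,t)<\infty$, so $(T_k\xi)(t)=\langle\xi,\delta_t\rangle_k$ is a finite number for every $t$ by \eqref{cs-098}, and $\widetilde{T_k\xi}(\mu)=\langle\xi,\mu\rangle_k$ is continuous on $\mathfrak M_1(k)$, again by \eqref{cs-098}; hence $T_k$ maps $\mathfrak M_1(k)$ into its anti-dual.

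With this in place, Step~1 goes as in Section~\ref{sec1} (equivalently, it is the content of the two lemmas of Section~\ref{sec2} applied with $\mathfrak H_0=\mathfrak M_1(k)$): the completion $\mathfrak D_1(k)$ of $\mathfrak M_1(k)$ is a Hilbert space, so the Riesz theorem supplies $U_k\colon(\mathfrak M_1(k))^*\to\mathfrak D_1(k)$ with $\langle f,\xi\rangle=\langle U_kf,\xi\rangle_{\mathfrak D_1(k)}$; putting $f=T_k\mu$ gives $U_kT_k=I$, so $U_k$ is onto and $T_k$ extends to a unitary $\mathfrak D_1(k)\to(\mathfrak M_1(k))^*$, and $\langle f,g\rangle_{(\mathfrak M_1(k))^*}:=\langle U_kf,U_kg\rangle_{\mathfrak D_1(k)}$ realizes the Banach dual norm as a Hilbert norm. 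Step~2 --- that this Hilbert norm and the dual Banach norm coincide --- is unchanged: for $\xi\in\mathfrak M_1(k)$,
\[
\|T_k\xi\|_{(\mathfrak M_1(k))^*}^2=\sup_{\langle\eta,\eta\rangle_k\le 1}|\langle\eta,\xi\rangle_k|^2\le\langle\xi,\xi\rangle_k
\]
by \eqref{cs-098}, with equality at $\eta=\xi/\sqrt{\langle\xi,\xi\rangle_k}$ when $\xi\not\sim_k 0$, the degenerate case being trivial.

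Step~3 identifies the reproducing kernel. For $f\in(\mathfrak M_1(k))^*$ and $\xi\in\mathfrak M_1(k)$,
\[
\langle f,T_k\xi\rangle_{(\mathfrak M_1(k))^*}=\langle U_kf,U_kT_k\xi\rangle_{\mathfrak D_1(k)}=\langle U_kf,\xi\rangle_{\mathfrak D_1(k)}=\langle f,\xi\rangle=\widetilde f(\xi),
\]
which is the asserted reproducing property. Taking $f=T_k\mu$ shows that $\nu\mapsto\langle T_k\mu,T_k\nu\rangle_{(\mathfrak M_1(k))^*}=\langle\mu,\nu\rangle_k=\widetilde{T_k\mu}(\nu)$ is the element $\widetilde f$ of our space with $f=T_k\mu$, and $\langle\widetilde g,\widetilde{T_k\mu}\rangle=\langle g,T_k\mu\rangle_{(\mathfrak M_1(k))^*}=\widetilde g(\mu)$ for every $g\in(\mathfrak M_1(k))^*$, so the space is indeed the reproducing kernel Hilbert space of functions on $\mathfrak M_1(k)$ with kernel $(\mu,\nu)\mapsto\langle T_k\mu,T_k\nu\rangle$. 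I expect the only genuine friction to be the Step~1 bookkeeping --- producing $U_k$ and verifying $U_kT_k=I$, so that the abstract Banach dual really does carry the claimed Hilbert structure --- together with the Fubini/Tonelli (or polarization) argument that the measurable hypothesis suffices to make $\langle\cdot,\cdot\rangle_k$ a symmetric sesquilinear form; every remaining assertion is a transcription of the proof in Section~\ref{sec1}.
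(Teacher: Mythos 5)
Your proposal is correct and follows the same route the paper intends: Theorem~\ref{thm2} is presented there without a separate proof, as the observation that the three-step argument of Section~\ref{sec1} for Theorem~\ref{thm1} uses nothing about $k$ beyond joint measurability and the positivity of \eqref{xi-equiv}, which is exactly the transcription you carry out. Your extra care at the one point where the hypotheses genuinely differ --- justifying that $\langle\xi,\eta\rangle_k$ is a well-defined sesquilinear form (via Tonelli on the Jordan decompositions or via polarization) now that the joint-continuity argument of Proposition~\ref{procont} is unavailable --- is a welcome addition that the paper leaves implicit.
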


\begin{remark}{\rm
The framework considered here includes for instance the singular integral
kernel appearing in the study of the fractional Brownian motion, namely
\[
  k(t,s)=|t-s|^{2H-2}
\]
with Hurst constant $H\in[1/2,1)$; see \cite{MR1801485}. For Lebesgue measurable functions $f$ and associated signed measure $f(t)dt$ the condition
$\langle f(t)dt, f(t)dt\rangle_k<\infty$ can be rewritten as
\[
  \int_{\mathbb R}|\widehat{f}(u)|^2|u|^{1-2H}du<\infty,
\]
as follows from Parseval's equality and since the distributional Fourier transform of $|t|^{2H-2}$ is $c_H|u|^{1-2H}$ with $c_H=2\Gamma(2H-1)\sin((1-H)\pi)$.
See e.g. \cite[p. 170]{GS}.
For more general information on such integral operators, see  \cite{MR0290095}.}
\end{remark}
\section{Quadratic forms, operator ranges and reproducing kernels}
\setcounter{equation}{0}
Connections between operator ranges and reproducing kernel spaces are well known, but usually
considered in the setting of bounded operators. We here consider a more general case, using the theory of quadratic forms.
In the notation of the previous section we still assume that $S$ is a measure space, with $\sigma$-algebra $\mathcal B$ and we fix a positive $\sigma$-finite measure $\mu$ on $S$.
We define
\begin{equation}
\mathcal F_\mu=\left\{A\in\mathcal B\,;\, \mu(A)<\infty\right\},
\end{equation}
assume that $k(t,s)$ is jointly measurable and positive in the sense that
\begin{equation}
\label{bmu}  
Q(\varphi)=\iint_{S\times S}\overline{\varphi(t)}k(t,s)\varphi(s)(d\mu\times d\mu)(t,s)\ge 0
\end{equation}
for every $\varphi$ in the linear span $\mathfrak D$ of the characteristic functions $1_A$, with $A\in\mathcal F_\mu$.\smallskip 

Assuming $Q$ densely defined and closed, Kato's theorem on quadratic forms, see \cite{MR1335452}, ensures that there is a positive self-adjoint operator $T$ such that
\[
Q(\varphi)=\langle \varphi,T\varphi\rangle_{\mathbf L_2(S,d\mu)}=\|T^{1/2}\varphi\|^2_{\mathbf L_2(S,d\mu)}, \quad \varphi\in\mathfrak D.
\]
We then defined the associated reproducing kernel Hilbert space to be the range of $T^{1/2}$ with the operator range norm, meaning
\begin{equation}
\langle T^{1/2}\varphi,T^{1/2}\psi\rangle_T=\langle \varphi,(I-\pi)\psi\rangle_{\mathbf L_2(S,d\mu)},
\end{equation}  
where $\pi$ is the orthogonal projection onto the kernel (null space) of $T$. We then have 
\begin{eqnarray}
  \langle T\varphi,T\psi\rangle_T&=&\langle T\varphi,\psi\rangle_{\mathbf L_2(S,d\mu)},\\
  \langle T^{1/2}\varphi,T1_A\rangle_T&=&\langle T\varphi,1_A\rangle_{\mathbf L_2(S,d\mu)}=\int_A(T\varphi)(t)d\mu(t)
\end{eqnarray}
and the reproducing kernel is now
\[
k(A,B)=Q(1_{A\cap B})=\iint_{A\times B}d\mu(t)d\mu(s).
\]  

Within the present setting one can also construct the above reproducing kernel space as a projective limit.
We define
\begin{equation}
\mathfrak H_A(k)=\left\{ F(t)=\int_Ak(t,s)\varphi(s)d\mu(s),\,\,\varphi\in\mathfrak D\right\}
\end{equation}  
with inner product (with $G(t)=\int_Ak(t,s)\psi(s)d\mu(s)$, where $\psi\in\mathfrak D$)
\begin{equation}
\langle F,G\rangle=\iint_{A\times A}\overline{\psi(t)}k(t,s)\varphi(s)(d\mu\times d\mu)(t,s)
\end{equation}

We define the injective limit
\begin{equation}
\mathfrak H  =\lim_{\substack{A \rightarrow S\\ A\in\mathcal F_\mu}}\mathfrak H_A(k)
\end{equation}
and the following arguments are similar to the case of finite space considered earlier.
\begin{proposition}
Let $A\in\mathcal F_\mu$.  The map $T_A$ from $\mathbf L_2(A,\mu)$ onto $\mathfrak H_A$ defined by
\begin{equation}
T_A(\varphi d\mu)=\int_Ak(t,s)\varphi(s)d\mu(s)
\end{equation}  
is an isometry, and $T_AT_A^*=Q_A$ is the projection from $\mathbf L_2(S,\mu)$ onto $\mathbf L_2(A,\mu)$:
\begin{equation}
(Q_Ag)(t)=\int_Ak(t,s)T_A^*(g|_A)(s)d\mu(s)
\end{equation}  
\end{proposition}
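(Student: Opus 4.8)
The plan is to reproduce, in the present measure-theoretic setting, the finite-rank analysis of Section~\ref{secF}: the role of the Gram matrix $K_F$ is now played by the positive self-adjoint operator $T$ produced by Kato's theorem, the role of $\mathfrak L_F(k)$ by $\mathfrak H_A(k)$, and the role of the finite measure space $\mathfrak M_F(k)$ by $\mathbf L_2(A,\mu)$. Since $\mu$ is $\sigma$-finite, the step functions $1_A\varphi$ with $\varphi\in\mathfrak D$ are dense in $\mathbf L_2(A,\mu)$, so it suffices to define $T_A$ on those and to extend by continuity once the isometry is established; likewise $\mathfrak H_A(k)$ is, by definition, the closed linear span of the functions $T_A(1_B\,d\mu)$ with $B\subseteq A$, so surjectivity of $T_A$ onto $\mathfrak H_A(k)$ is built in, and the real content of the statement is the norm identity together with the formula for $T_AT_A^{*}$.

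First I would verify that the form written on $\mathfrak H_A(k)$ depends only on the function $F$ and not on the chosen density $\varphi$. If $\int_Ak(t,s)\varphi(s)\,d\mu(s)=\int_Ak(t,s)\varphi'(s)\,d\mu(s)$ for all $t$, set $h=1_A(\varphi-\varphi')$; then $Q(h)=0$ by \eqref{bmu}, and the Cauchy--Schwarz inequality for the positive (possibly degenerate) form $Q$ gives $|Q(h,1_A\psi)|^{2}\le Q(h)\,Q(1_A\psi)=0$ for every $\psi\in\mathfrak D$, which is exactly the required independence. The very same bound shows that the double integrals occurring below are absolutely convergent, which is what legitimizes the Fubini interchanges; joint measurability of $k$ and $A\in\mathcal F_\mu$ take care of the rest.

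The isometry is then a one-line computation from the definitions:
\[
\langle T_A(\varphi\,d\mu),\,T_A(\psi\,d\mu)\rangle_{\mathfrak H_A}
=\iint_{A\times A}\overline{\psi(t)}\,k(t,s)\,\varphi(s)\,(d\mu\times d\mu)(t,s)
=\langle T^{1/2}(1_A\varphi),\,T^{1/2}(1_A\psi)\rangle_{\mathbf L_2(S,\mu)} ,
\]
so that $T_A$ preserves the inner product $(\varphi,\psi)\mapsto Q(1_A\varphi,1_A\psi)$ carried by $\mathbf L_2(A,\mu)$ --- the continuous counterpart of the identification $\langle\delta_s,\delta_t\rangle=k(t,s)$ of \eqref{deltatdeltas}, under which $T_k$ was unitary onto $\mathfrak L_F(k)$ --- and is therefore a Hilbert space isometry onto $\mathfrak H_A(k)$, with $T_A^{*}T_A=I$. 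For the adjoint, unwinding $\langle T_A(\varphi\,d\mu),g\rangle=\langle\varphi\,d\mu,\,T_A^{*}g\rangle$ and applying Fubini identifies $T_A^{*}$ with essentially the restriction map $g\mapsto g|_A$ (intertwined with the kernel operator over $A$); plugging this into $T_AT_A^{*}$ produces the stated formula $(Q_Ag)(t)=\int_Ak(t,s)\,(T_A^{*}(g|_A))(s)\,d\mu(s)$. Then $Q_A^{*}=Q_A$ is immediate from $(T_AT_A^{*})^{*}=T_AT_A^{*}$ and $Q_A^{2}=Q_A$ follows from $T_A^{*}T_A=I$; hence $Q_A$ is an orthogonal projection, and recognizing that its range consists precisely of the elements concentrated on $A$ identifies it with the projection of $\mathbf L_2(S,\mu)$ onto $\mathbf L_2(A,\mu)$.

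The hard part is not any individual identity but the bookkeeping imposed by degeneracy and unboundedness: $Q$ need not be closable as written, $T$ need not be bounded, and both $\langle\cdot,\cdot\rangle_{\mathfrak H_A}$ and the domain form are only positive semi-definite, so each of $T_A$, $T_A^{*}$, $T_AT_A^{*}$ must in fact be read between the Hilbert-space quotients obtained by modding out null spaces --- exactly the operation performed on self-orthogonal elements in \eqref{deltatdeltas} and \eqref{xi-equiv}, and which elsewhere in the paper is sidestepped by assuming non-degeneracy. Carrying those quotients correctly while justifying each Fubini step from the finiteness of $Q$ on $\mathfrak D$ and from $A\in\mathcal F_\mu$ is where the work lies; once this is done, the four assertions collapse to the computations sketched above.
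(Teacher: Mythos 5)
The paper states this proposition without proof, so there is no argument of the authors' to measure yours against; the relevant benchmark is the finite--rank analogue in Section \ref{secF}, where $T_k$ of \eqref{tkf} is unitary from $\mathfrak M_F(k)$ (carrying the Gram form \eqref{deltatdeltas}) onto $\mathfrak L_F(k)$, and $JJ^*=Q_{F_1F_2}$ is the orthogonal projection of $\mathfrak L_{F_2}(k)$ onto $\mathfrak L_{F_1}(k)$. Your treatment of the isometry follows that template correctly: the identity $\langle T_A(\varphi\,d\mu),T_A(\psi\,d\mu)\rangle_{\mathfrak H_A}=Q(1_A\varphi,1_A\psi)=\langle T^{1/2}1_A\varphi,T^{1/2}1_A\psi\rangle_{\mathbf L_2(S,d\mu)}$ is definitional, and your Cauchy--Schwarz argument for independence of the representative $\varphi$ is correct and necessary. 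But you should state explicitly that this makes $T_A$ an isometry only when $\mathbf L_2(A,\mu)$ is renormed with the form $Q(1_A\cdot,1_A\cdot)$, i.e.\ with $\|T^{1/2}1_A\varphi\|^2$, and not with the standard $\mathbf L_2$ inner product; for the latter the claim fails unless $T^{1/2}$ restricts to an isometry of $\mathbf L_2(A,\mu)$. You reinterpret the statement silently where you ought to flag the discrepancy.

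The genuine gap is your last step. Since $T_A$ is an isometry, the range of $T_AT_A^*$ is exactly the closed range of $T_A$, namely $\mathfrak H_A(k)$, whose elements are the functions $t\mapsto\int_Ak(t,s)\varphi(s)\,d\mu(s)$ defined on all of $S$ --- they are not ``concentrated on $A$'' in any sense. So ``recognizing that its range consists precisely of the elements concentrated on $A$'' does not identify $Q_A$ with the projection of $\mathbf L_2(S,\mu)$ onto $\mathbf L_2(A,\mu)$, and no such identification is available. Moreover, for $T_AT_A^*$ to be a nontrivial projection at all, $T_A$ must be regarded as an isometry \emph{into} a larger space (the injective limit $\mathfrak H$, or $\mathfrak H_B(k)$ for $B\supset A$), in which case $Q_A=T_AT_A^*$ is the orthogonal projection of that space onto $\mathfrak H_A(k)$, the exact analogue of $Q_{F_1F_2}=JJ^*$ in the proof of Theorem \ref{4.2}; read as you read it, with $T_A$ onto $\mathfrak H_A$, one gets $T_AT_A^*=I$ and the second assertion is empty. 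A complete proof must therefore first fix the ambient space, then explain in what sense $g|_A$ determines an element on which $T_A^*$ acts, and only then verify the displayed formula; the Fubini interchanges you defer also require a pointwise bound such as $|k(t,s)|\le\sqrt{k(t,t)k(s,s)}$ or an integrated substitute, which is not automatic for a kernel that is only assumed positive in the integrated sense \eqref{bmu}.
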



The system of projections $\left\{Q_A,A\in\mathcal F_\mu\right\}$ is the counterpart of the system $\left\{Q_F,F\in\mathcal F\right\}$ in Section \ref{secF}, and
when $A\subset B$ we have $Q_A=Q_AQ_B$.\smallskip

%

\section{An application to stochastic processes}
\setcounter{equation}{0}
We now give an application to random variables of the construction presented in the previous section. More precisely, we start as before with $k(t,s)$ positive
definite on $S$, and we associate to $k(t,s)$ a measurable positive kernel defined in an underlying probability space. 
Let thus $(\Omega,\mathcal B,P)$ be a probability space. For every random variable $X$ with values in $S$, we set
\[
k_X(u,v)=k(X(u),X(v)),\quad u,v\in\Omega
\]  
to be the induced kernel on $\Omega\times \Omega$ defined from $X$.
For a complex-valued random variable $X$ on $\Omega$ we defined a signed measure on $S$ via
\begin{equation}
  W_X(P)=P\circ X^{-1}.
\end{equation}
We then have:
\begin{equation}
  \label{isometry789}
\begin{split}
\iint_{\Omega\times \Omega}k(X(u),X(v))dP(u)dP(v)
&=\iint_{S\times S}(d(\overline{P\circ  X^{-1})(t)})k(t,s)d(P\circ X^{-1})(s)
\end{split}
\end{equation}
when the second double integral is finite. We denote by $\mathfrak M_1(k_X)\subset\mathfrak M_1(k)$
the corresponding set of signed measures.
 We define a norm on the corresponding random variables ${\rm RV}_k(\Omega)$ via
\begin{equation}
\langle P, P\rangle=\langle W_PX,W_PX\rangle_{\mathfrak M_1(k)}
\end{equation}

From \eqref{isometry789} we have:

\begin{theorem}
  The map $W_X$ is an isometry from ${\rm RV}_k(P)$ into $\mathfrak M_1(k)$.
\end{theorem}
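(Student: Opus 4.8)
The statement is essentially a repackaging of the change-of-variables identity \eqref{isometry789}, so the plan is to (i) make the domain precise, (ii) reconstruct \eqref{isometry789} with its measure-theoretic justification, and (iii) read off the isometry from the definition of the norm on ${\rm RV}_k(P)$. For (i): an element of ${\rm RV}_k(P)$ is an $S$-valued random variable $X$ on $(\Omega,\mathcal B,P)$ whose law $W_X(P)=P\circ X^{-1}$ belongs to $\mathfrak M_1(k)$, i.e. for which
\[
\iint_{S\times S}\overline{d(P\circ X^{-1})(t)}\,k(t,s)\,d(P\circ X^{-1})(s)<\infty,
\]
and the seminorm on ${\rm RV}_k(P)$ is \emph{defined} to be $\langle W_X(P),W_X(P)\rangle_{\mathfrak M_1(k)}$. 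Thus the content of ``$W_X$ is an isometry into $\mathfrak M_1(k)$'' is that the assignment $X\mapsto W_X(P)$ is well defined into $\mathfrak M_1(k)$, that it preserves the Hermitian forms, and that it descends to an injective map on equivalence classes with image $\mathfrak M_1(k_X)$. The first point is immediate: $P\circ X^{-1}$ is the pushforward of a probability measure, hence a genuine finite positive measure; joint continuity of $k$ with respect to $d_k$ (Proposition \ref{procont}) makes $\mathcal B_k$-measurability the natural hypothesis on $X$ and ensures $W_X(P)$ is defined on $\mathcal B_k$; and membership in $\mathfrak M_1(k)$ is exactly the defining property of ${\rm RV}_k(P)$.

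For (ii) I would apply the abstract change-of-variables formula $\int g\,d(\Phi_*m)=\int (g\circ\Phi)\,dm$ on the product space: with $\Phi=X\times X\colon\Omega\times\Omega\to S\times S$ and $m=P\times P$ one has $(X\times X)_*(P\times P)=(P\circ X^{-1})\times(P\circ X^{-1})$, and taking $g(t,s)=k(t,s)$ yields
\[
\iint_{\Omega\times\Omega}k(X(u),X(v))\,dP(u)\,dP(v)=\iint_{S\times S}\overline{d(P\circ X^{-1})(t)}\,k(t,s)\,d(P\circ X^{-1})(s),
\]
the complex conjugate on one differential being harmless since $k$ is Hermitian and $P\circ X^{-1}$ is real. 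The one point requiring care — and the step I expect to be the main obstacle — is the Fubini/Tonelli justification: $k$ need not be of one sign, so one must either verify absolute integrability, using $|k(t,s)|\le\sqrt{k(t,t)}\sqrt{k(s,s)}$ when $\int\sqrt{k(t,t)}\,d(P\circ X^{-1})(t)<\infty$, or argue — exactly as was done for \eqref{xi-equiv} in Section \ref{dual123456789} via Proposition \ref{procont} — that the order of integration is immaterial, and then transfer the identity through simple-function approximations of $k$. Once the positivity of the double integral is in hand (it is a nonnegative Hermitian form, as noted in Section \ref{dual123456789}), both sides are unambiguously defined and the identity holds under the standing hypothesis that the right-hand side is finite.

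For (iii): combining the above, $\|X\|_{{\rm RV}_k(P)}^2=\langle W_X(P),W_X(P)\rangle_{\mathfrak M_1(k)}=\|W_X(P)\|_{\mathfrak M_1(k)}^2$, so the assignment $X\mapsto W_X(P)$ preserves the seminorm; by polarization — applicable because both brackets are Hermitian sesquilinear forms — it preserves the Hermitian forms as well. Since the self-null elements on each side are precisely the kernels of the respective forms and the form is preserved, the map descends to an injective, norm-preserving map of the pre-Hilbert quotient ${\rm RV}_k(P)$ into $\mathfrak M_1(k)$, with range the set $\mathfrak M_1(k_X)$ of laws of members of ${\rm RV}_k(P)$. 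This is exactly the claimed isometry (and, by the usual extension, an isometry between the respective Hilbert-space completions).
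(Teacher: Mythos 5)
Your proposal is correct and follows essentially the same route as the paper, which derives the theorem directly from the change-of-variables identity \eqref{isometry789} together with the definition of the norm on ${\rm RV}_k$ via the pushforward measure. The additional care you take with the Fubini justification, polarization, and the passage to equivalence classes fills in details the paper leaves implicit, but the underlying argument is identical.
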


We have here the generalization of the notion of distribution of a random variable.\\

We give a Hilbert structure on a family of random variables on $\Omega$ using the previous analysis.

\[
k_{X,Y}(u,v)=k(X(u),Y(v)),\quad u,v\in\Omega
\]

\begin{equation}
  \label{isometry789}
\begin{split}
\iint_{\Omega\times \Omega}k(X(u),Y(v))dP(u)dP(v)
&=\iint_{S\times S}(d(\overline{P\circ  X^{-1})(t)})k(t,s)d(P\circ Y^{-1})(s)
\end{split}
\end{equation}

\section{Hausdorff distance}
\setcounter{equation}{0}
Finally we make some connections between our analysis and the Hausdorff distance of two measures in $\mathfrak M_1(k)$. Recall that the Hausdorff distance is defined to be
\begin{equation}
d_{\rm Haus}(\mu,\nu)=\sup_{f\in {\rm Lip}_1(S)}\left|\int_Sf(t)d\mu(t)-\int_Sf(t)d\nu(t)\right|
\end{equation}
where ${\rm Lip}_1$ denotes the set of functions on $S$ such that
\[
\|f\|_{\rm Lip}=\sup_{\substack{s,t\in S\\ s\not =t}}\frac{|f(t)-f(s)|}{d_k(t,s)}\le 1.
\]

\begin{proposition}
Let $\mu,\nu\in\mathfrak M_1(k)$. It holds that
\begin{equation}
d_{\rm Haus}(\mu,\nu)\le d_{\mathfrak H(k)}(\mu,\nu)=\|T_k(\mu)-T_k(\nu)\|_{\mathfrak H(k)}.
\end{equation}
\end{proposition}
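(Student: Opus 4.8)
The plan is to reduce the Hausdorff (Kantorovich--Rubinstein) distance to the reproducing kernel norm via the duality of Section~\ref{sec2}. Writing $\xi=\mu-\nu$, the right-hand side is
\[
d_{\mathfrak H(k)}(\mu,\nu)=\|T_k\xi\|_{\mathfrak H(k)}=\sqrt{\langle\xi,\xi\rangle_k},
\]
the second equality being the isometry $\|T_k\xi\|^2_{(\mathfrak M_1(k))^*}=\langle\xi,\xi\rangle_k$ from Step~2 of the proof of Theorem~\ref{thm1}. The one tool linking the two sides is the reproducing identity: for every $g\in\mathfrak H(k)=(\mathfrak M_1(k))^*$,
\[
\int_S g\,d\xi=\langle g,T_k\xi\rangle_{\mathfrak H(k)},
\]
obtained by integrating $g(s)=\langle g,k_s\rangle_{\mathfrak H(k)}$ against $\xi$ and pulling the integral inside the inner product, which is legitimate since $\xi\in\mathfrak M_1(k)$ forces $T_k\xi=\int_S k_s\,d\xi(s)\in\mathfrak H(k)$.

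First I would apply Cauchy--Schwarz to this identity: for $g$ in the unit ball of $\mathfrak H(k)$,
\[
\Big|\int_S g\,d\xi\Big|=\big|\langle g,T_k\xi\rangle_{\mathfrak H(k)}\big|\le\|g\|_{\mathfrak H(k)}\,\|T_k\xi\|_{\mathfrak H(k)}\le d_{\mathfrak H(k)}(\mu,\nu),
\]
with equality at $g=T_k\xi/\|T_k\xi\|_{\mathfrak H(k)}$. Hence the supremum of $|\int_S f\,d\xi|$ over the unit ball of $\mathfrak H(k)$ is \emph{exactly} $d_{\mathfrak H(k)}(\mu,\nu)$. The second step would compare the constraint set of $d_{\rm Haus}$, namely ${\rm Lip}_1(S)$, with that unit ball; by the Proposition stating that elements of $\mathfrak L(k)$ satisfy $\|f\|_{\rm Lip}\le\|f\|_{\mathfrak H(k)}$, the unit ball of $\mathfrak H(k)$ is contained in ${\rm Lip}_1(S)$.

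The main obstacle is that this containment runs opposite to the direction the stated inequality requires. Since $\{g:\|g\|_{\mathfrak H(k)}\le1\}\subseteq{\rm Lip}_1(S)$, enlarging the constraint set to ${\rm Lip}_1(S)$ can only raise the supremum, so the method yields
\[
d_{\mathfrak H(k)}(\mu,\nu)=\sup_{\|g\|_{\mathfrak H(k)}\le1}\Big|\int_S g\,d\xi\Big|\le\sup_{f\in{\rm Lip}_1(S)}\Big|\int_S f\,d\xi\Big|=d_{\rm Haus}(\mu,\nu),
\]
i.e. the reverse of the displayed claim. Closing the inequality in the stated direction would instead require ${\rm Lip}_1(S)\subseteq\{g:\|g\|_{\mathfrak H(k)}\le1\}$, which fails, as a $d_k$-Lipschitz function need not lie in $\mathfrak H(k)$ at all. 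A finite model exhibits the genuine gap: on $S=\{0,1,2\}$ with $k(i,j)=\delta_{ij}$ one has $d_k\equiv\sqrt2$ off the diagonal, and for $\mu=\delta_0$, $\nu=\tfrac12(\delta_1+\delta_2)$ the function $f=(\sqrt2,0,0)\in{\rm Lip}_1(S)$ gives $d_{\rm Haus}(\mu,\nu)\ge\sqrt2$, whereas $d_{\mathfrak H(k)}(\mu,\nu)=\sqrt{\langle\xi,\xi\rangle_k}=\sqrt{3/2}<\sqrt2$. Thus the correct statement is $d_{\mathfrak H(k)}(\mu,\nu)\le d_{\rm Haus}(\mu,\nu)$, which the plan above establishes in that form.
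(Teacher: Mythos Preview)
Your analysis is correct, and in fact it exposes an error in the paper itself. The paper's own proof consists of the single Cauchy--Schwarz estimate
\[
\frac{|f(t)-f(s)|}{d_k(t,s)}=\frac{|\langle f,k_t-k_s\rangle_{\mathfrak H(k)}|}{\|k_t-k_s\|_{\mathfrak H(k)}}\le \|f\|_{\mathfrak H(k)},
\]
i.e.\ exactly the inequality $\|f\|_{\rm Lip}\le\|f\|_{\mathfrak H(k)}$ that you invoke. This gives the containment $\{g:\|g\|_{\mathfrak H(k)}\le 1\}\subseteq{\rm Lip}_1(S)$, and, combined with the duality identity $\sup_{\|g\|_{\mathfrak H(k)}\le 1}|\int_S g\,d\xi|=\|T_k\xi\|_{\mathfrak H(k)}$, yields only
\[
d_{\mathfrak H(k)}(\mu,\nu)\le d_{\rm Haus}(\mu,\nu),
\]
the reverse of the displayed claim. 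The paper never supplies an argument in the stated direction, and your three-point counterexample with $k(i,j)=\delta_{ij}$ shows that no such argument can exist: there $d_{\mathfrak H(k)}(\mu,\nu)=\sqrt{3/2}<\sqrt{2}\le d_{\rm Haus}(\mu,\nu)$.

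So your approach and the paper's are the same Cauchy--Schwarz/containment argument; the difference is that you carry it through to its actual conclusion and notice that the inequality sign in the statement is reversed. The correct assertion is $d_{\mathfrak H(k)}(\mu,\nu)\le d_{\rm Haus}(\mu,\nu)$, which your write-up establishes.
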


\begin{proof}
By Cauchy-Schwarz inequality,
\[
\frac{|f(t)-f(s)|}{d_k(t,s)}=\frac{|\langle f,k_t-k_s\rangle_{\mathfrak H(k)}}{\|k_t-k_s\|_{\mathfrak H(k)}}\le \|f\|_{\mathfrak H(k)}.
\]
\end{proof}
\bibliographystyle{plain}
\def\cprime{$'$} \def\cprime{$'$} \def\cprime{$'$}
  \def\lfhook#1{\setbox0=\hbox{#1}{\ooalign{\hidewidth
  \lower1.5ex\hbox{'}\hidewidth\crcr\unhbox0}}} \def\cprime{$'$}
  \def\cprime{$'$} \def\cprime{$'$} \def\cprime{$'$} \def\cprime{$'$}
  \def\cprime{$'$}

\end{document}